\newtheorem{theorem}{Theorem}
\numberwithin{theorem}{section}
\newtheorem{lemma}[theorem]{Lemma}
\newtheorem{proposition}[theorem]{Proposition}
\newtheorem{conjecture}[theorem]{Conjecture}
\newtheorem{corollary}[theorem]{Corollary}
\theoremstyle{definition}
\newtheorem{definition}[theorem]{Definition}
\newtheorem{example}[theorem]{Example}
\theoremstyle{remark}
\newtheorem{remark}[theorem]{Remark}
\Crefname{conjecture}{Conjecture}{Conjectures}
\newcommand{\tR}{\widetilde{R}}
\newcommand{\cR}{\mathcal{R}}
\newcommand{\cC}{\mathcal{C}}
\newcommand{\RR}{\mathbb{R}}
\newcommand{\CC}{\mathbb{C}}
\DeclareMathOperator{\Ext}{Ext}
\newcommand{\newword}[1]{\textcolor{red!50!black}{\emph{#1}}}
\title[Combinatorial invariance for the coefficient of $q$]{Combinatorial invariance for the coefficient of $q$ in Kazhdan--Lusztig polynomials}
\author{Grant T. Barkley}
\author{Christian Gaetz}
\address[Barkley]{Department of Mathematics, University of Michigan, Ann Arbor, MI.}
\email{{\href{mailto:gbarkley@umich.edu}{gbarkley@umich.edu}}}
\address[Gaetz]{Department of Mathematics, University of California, Berkeley, CA.}
\email{{\href{mailto:gaetz@berkeley.edu}{{\tt gaetz@berkeley.edu}}}}
\author{Thomas Lam}
\address[Lam]{Department of Mathematics, University of Michigan, Ann Arbor, MI.}
\email{{\href{mailto:tfylam@umich.edu}{tfylam@umich.edu}}}
\date{\today}
\begin{document}
\begin{abstract}
We prove the combinatorial invariance of the coefficient of $q$ in Kazhdan--Lusztig polynomials for arbitrary Coxeter groups. As a result, we obtain the Combinatorial Invariance Conjecture, of Lusztig and of Dyer, also for Bruhat intervals of length at most $6$. We also prove the Gabber--Joseph conjecture for the second-highest $\Ext$ group of a pair of Verma modules, as well as the combinatorial invariance of the dimension of this group, and of the numbers of frozen and of mutable variables in the cluster structure on Richardson varieties.
\end{abstract}
\keywords{}
\subjclass{}
\maketitle
\section{Introduction}

\subsection{The Combinatorial Invariance Conjecture}
The \emph{Kazhdan--Lusztig polynomials} $P_{u,v}(q) \in \mathbb{N}[q]$, indexed by elements $u,v$ in a Coxeter group $W$, underlie deep connections between the singularities of Schubert varieties, representations of Lie algebras, bases for Hecke algebras, and Soergel bimodules \cite{Kazhdan-Lusztig-1, Elias-Williamson}. Yet the \emph{Combinatorial Invariance Conjecture}, due to Lusztig (c. 1983) and to Dyer \cite{Dyer1987}, asserts that they depend only on the Bruhat interval $[u,v]$ as an abstract poset (a property we will more generally refer to as \emph{combinatorial invariance}).

\begin{conjecture}[Combinatorial Invariance Conjecture]
\label{conj:cic}
Let $[u,v] \subset W$ and $[u',v'] \subset W'$ be Bruhat intervals such that $[u,v] \cong [u',v']$ as posets. Then $P_{u,v}(q)=P_{u',v'}(q)$.
\end{conjecture}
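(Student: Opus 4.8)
The plan is to reduce \Cref{conj:cic} to the combinatorial invariance of the $R$-polynomials and then to attack that via special matchings. For the reduction, recall the Kazhdan--Lusztig--Stanley relation $q^{\ell(u,v)}P_{u,v}(q^{-1}) = \sum_{u\le z\le v} R_{u,z}(q)P_{z,v}(q)$, where $\ell(u,v)$ is the rank. Read from the top it determines each $P_{z,v}$ uniquely from the polynomials $R_{x,y}$ on subintervals of $[u,v]$, given the rank function and the degree bound $\deg P_{x,y} < \tfrac12\ell(x,y)$ for $x<y$; read from the bottom it recovers all the $R_{x,y}$ from the $P_{x,y}$. Since Bruhat intervals are graded the rank is a poset invariant, so \Cref{conj:cic} is \emph{equivalent} to the assertion that $R_{x,y}(q)$ depends only on the isomorphism type of the poset $[x,y]$.

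To prove this I would use special matchings. A special matching of a finite graded poset $Q$ with a maximum is a fixed-point-free involution $M$ with $x\lessdot M(x)$ or $M(x)\lessdot x$ for all $x$, and with $x\lessdot y$, $M(x)\ne y$ forcing $M(x)\lessdot M(y)$. When $s$ is a right descent of $v$ but not of $u$, the map $x\mapsto xs$ restricts to a special matching of $[u,v]$, a \emph{Bruhat} matching, and the $R$-polynomial recursion driven by such a matching expresses $R_{x,v}$ on $[u,v]$ in terms of $R$-polynomials of proper subintervals in a way visibly depending only on the poset and the matching. The program then has four steps: (i) show that every poset isomorphic to a Bruhat interval admits a special matching; (ii) set up an abstract $R$-recursion driven by an arbitrary choice of special matching at each stage, and prove it is \emph{confluent} --- its output independent of all the choices; (iii) prove that this abstract output equals the true $R$-polynomial, which for intervals admitting a Bruhat matching follows by induction on $|Q|$ from (ii), but which requires genuinely new input for intervals whose special matchings are all \emph{exotic} (not of Bruhat type); (iv) conclude by induction on $|[x,y]|$, with the main theorem of the present paper --- combinatorial invariance of the coefficient of $q$ in $P_{u,v}$ --- serving as an input that pins the recursion down in the smallest cases and, more speculatively, as the base case of a parallel induction on the $q$-degree.

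The crux is the confluence in (ii) together with the exotic case of (iii). Following du Cloux and Brenti, confluence reduces to understanding two special matchings $M,N$ of a single interval and the orbit structure of the dihedral group $\langle M,N\rangle$ that they generate; but the degree truncation built into the definition of $P$, and transmitted to the $R$-polynomials through the Kazhdan--Lusztig--Stanley relation, makes interchanging the two induced recursions far more rigid than in the purely linear world of the $R$-polynomial identity alone --- interchanging a Bruhat matching with an exotic one on the same interval is already the essential difficulty. Exotic matchings genuinely occur, already for the rank-$3$ interval $[s,sts]$ in the infinite dihedral group, so (iii) cannot be finished without a structural theory of them, and step (i) is likewise unknown for general Coxeter groups. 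I expect that settling confluence and controlling exotic matchings, even when restricted to lower intervals $[e,w]$, is the essential missing ingredient, and that the coefficient-of-$q$ theorem proved here is best understood as the first case of the induction that such an argument would carry out.
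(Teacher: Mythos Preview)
The statement you are attempting is \Cref{conj:cic}, which is an \emph{open conjecture}; the paper does not prove it. The paper establishes only the combinatorial invariance of $[q]P_{u,v}$ (\Cref{thm:d-equals-g} and \Cref{cor:cic-for-coefficients}) and, as a consequence, the full conjecture for $\ell(u,v)\le 6$ (\Cref{cor:cic-length-5-6}). So there is no ``paper's proof'' to compare against.

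What you have written is not a proof but a research programme --- essentially the special-matching approach of du Cloux, Brenti, Caselli, and Marietti --- and you yourself flag its gaps. Those gaps are real and are exactly why the conjecture is open. Concretely: step~(i), the existence of a special matching on an arbitrary Bruhat interval $[u,v]$, is not known for general Coxeter groups (it holds for lower intervals $[e,w]$, which is why that case is settled, but fails or is unknown in general); and steps~(ii)--(iii), confluence of the abstract $R$-recursion in the presence of exotic matchings, is precisely the obstruction that has confined this method to lower intervals and a few other special classes. Your reduction paragraph is fine, but the rest is an outline of hoped-for lemmas, not an argument.

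Finally, the suggestion that the coefficient-of-$q$ theorem proved here functions as a base case or anchoring input for the special-matching induction is speculative: there is no mechanism in the Brenti--du Cloux framework by which knowledge of a single coefficient of $P_{u,v}$ propagates to control of the full recursion, and the paper makes no such claim.
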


\noindent \Cref{conj:cic} can equivalently be stated also for the \emph{$R$-polynomials} $R_{u,v}(q)$, which in crystallographic type are the $\mathbb{F}_q$-point counts of the open Richardson varieties $\cR_{u,v}$, or for Brenti and Dyer's $\tR_{u,v}$ polynomials (see \Cref{sec:background}).

Despite significant attention, progress on \Cref{conj:cic} for general Coxeter groups has been limited. It follows from the definitions \cite{Kazhdan-Lusztig-1} that $P_{u,v}(q)$ has constant term $1$ and that $R_{u,v}(q)$ and $\tR_{u,v}(q)$ are monic of degree $\ell(u,v)$. And it follows from Dyer's interpretation \cite{Dyer1987} of $\tR_{u,v}(q)$ that the coefficient of $q^2$ in $\tR_{u,v}(q)$ is a combinatorial invariant; this implies \Cref{conj:cic} when $\ell(u,v) \leq 4$. To date, the Combinatorial Invariance Conjecture is not known to hold for any other coefficients or interval lengths, the most recent result for general Coxeter groups being the resolution  of the conjecture when $u=e$ and $u'=e'$ are both the identity elements of their respective groups \cite{Lower-intervals-general-type}. 

More progress has been made in the case $W$ is finite, and in particular when $W=S_n$ is the symmetric group, although the conjecture remains open even there. In this case it is known that there are finitely many isomorphism types of intervals of a fixed length; this allowed Incitti \cite{Incitti, Incitti2007} to verify \Cref{conj:cic} for intervals with $\ell(u,v) \leq 8$ in $S_n$. The theory of \emph{flipclasses} has also been applied to prove the combinatorial invariance of coefficients of small powers of $q$ in $\tR_{u,v}$, by reducing to a finite check \cite{esposito-marietti-flipclasses, esposito-marietti-stella}. Work for $W=S_n$ has recently been reinvigorated by the conjectural approach via \emph{hypercube decompositions} \cite{Blundell, davies}, which have been used to generalize the $u=e$ case to \emph{elementary intervals} \cite{elementary-paper}. See \cite{Brenti-open-problems} for a survey of progress on \Cref{conj:cic}.

For elements $u,v\in W$, the \emph{$d$-invariant} $d_{u,v} \in \mathbb{N}$ is defined by 
\[ R_{u,v} = q^{\ell(u,v)} - d_{u,v} q^{\ell(u,v)-1} + \cdots. \] 
This quantity is also of significant recent interest \cite{ellenberg2026bruhatintervalslargehypercubes}. Patimo \cite{Patimo-q-coefficient} introduced the quantity $g_{u,v}$, the minimum size of a \emph{diamond-generating} set of $[u,v]$ (see \Cref{sec:background}). Importantly, $g_{u,v}$ is manifestly an invariant of the poset $[u,v]$. The following is our main theorem.

\begin{theorem}
\label{thm:d-equals-g}
Let $u \leq v$ be elements of any Coxeter group $W$. Then $d_{u,v}=g_{u,v}$. 
\end{theorem}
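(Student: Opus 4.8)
The plan is to show that $g_{u,v}$ obeys the same recursion (and base cases) as $d_{u,v}$. The recursion for $d$ comes from the classical $R$-polynomial recursion: fixing $s\in\Des_L(v)$, one has $R_{u,v}=R_{su,sv}$ when $s\in\Des_L(u)$, and $R_{u,v}=qR_{su,sv}+(q-1)R_{u,sv}$ when $s\notin\Des_L(u)$ (with $R_{su,sv}:=0$ unless $su\le sv$). Reading off the coefficient of $q^{\ell(u,v)-1}$ gives: $d_{u,v}=d_{su,sv}$ in the first case; $d_{u,v}=d_{u,sv}+1$ in the second case if $su\not\le sv$, and $d_{u,v}=d_{u,sv}$ if $su\le sv$. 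Since $[u,v]\cong[su,sv]$ as posets when $s\in\Des_L(u)\cap\Des_L(v)$, and $g$ is manifestly a poset invariant, the first case is automatic for $g$; so, inducting on $\ell(u,v)$ with a secondary induction on $\ell(v)$, everything reduces to comparing $[u,v]$ with its coatom subinterval $[u,sv]$ and showing $g_{u,v}-g_{u,sv}$ equals $d_{u,v}-d_{u,sv}\in\{0,1\}$.

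For the upper bound $g_{u,v}\le d_{u,v}$ I would lift a minimal diamond-generating set of $[u,sv]$ to one of $[u,v]$. The structural input is the parabolic decomposition $[u,v]=B\sqcup sB$ where $B=\{w\in[u,v]:sw>w\}$: the map $w\mapsto sw$ matches $B$ with $sB$ by covering relations $w\lessdot sw$, one has $B\subseteq[u,sv]$, every element of $[u,v]\setminus[u,sv]$ lies in $sB$, and each cover $w\lessdot w'$ inside $B$ produces a genuine diamond $\{w,w',sw,sw'\}$ (using that rank-two Bruhat intervals are Boolean). These matched edges and diamonds let one propagate any diamond-generating set of $[u,sv]$ through the upper part $sB$; one checks that a single additional generator is required, and is unavoidable, precisely when $su\not\le sv$ — exactly the case in which $\ell(u,v)$ drops — whereas when $su\le sv$ the element $su\in[u,sv]$ already seeds the propagation. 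This matches the recursion for $d$.

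For the lower bound $d_{u,v}\le g_{u,v}$ one must show every diamond-generating set has at least $d_{u,v}$ elements. I would package this linear-algebraically: pass to the free vector space on $[u,v]$ modulo the span of the diamond relations (one per rank-two subinterval), so that a diamond-generating set becomes a spanning set and $g_{u,v}$ becomes the dimension of the quotient; then show this dimension satisfies the recursion above. Here too the isomorphism case is immediate, and the content is a precise count of how the new rank-two intervals created in passing from $[u,sv]$ to $[u,v]$ cut the relation space down — by $1$ when $su\not\le sv$ and by $0$ when $su\le sv$, so that the dimension jumps by exactly the same amount as $d_{u,v}$.

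The crux, in both directions, is the appearance of the dichotomy $su\le sv$ versus $su\not\le sv$. This is a priori not a property of the abstract poset $[u,v]$ at all: it depends on how $[u,v]$ is realized inside the Coxeter group. The real substance of the theorem is that its effect on the diamond combinatorics of $[u,v]$ — the necessity of an extra generator, equivalently the drop in the relation space — is exactly its effect on $d_{u,v}$, so that the common value is intrinsic to the poset. Pinning down that the extra generator is forced in exactly the case $su\not\le sv$, through a criterion visible purely in terms of diamonds in $[u,v]$, is where the argument must do genuine work, and is what ultimately makes $d_{u,v}$ a combinatorial invariant. The choice of descent $s$ (and whether to work with $\Des_L$ or $\Des_R$) will need to be made carefully so that this criterion is clean; getting that bookkeeping right is, I expect, the main obstacle.
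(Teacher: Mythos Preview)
Your plan has the right skeleton---induct along the $R$-polynomial recursion and match $g$ to $d$ case by case---but there is a genuine gap, and it is precisely the one you flag at the end.

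First, a smaller issue: the inequality $d_{u,v}\le g_{u,v}$ is already Patimo's (the paper quotes it as a lemma), so you need only the upper bound $g_{u,v}\le d_{u,v}$. Your proposed linear-algebraic repackaging for the lower bound does not work as stated: diamond-closure is a combinatorial closure, not a linear one. In the quotient by your diamond relations, knowing $e_1$ and $e_2$ only gives you $e_3+e_4$, not $e_3$ and $e_4$ individually, so a diamond-generating set is not obviously a spanning set and $g_{u,v}$ is not obviously that dimension.

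The real gap is in the upper bound, in the case $su>u$ and $su\le sv$, where you must show that $E_{u,sv}$ already diamond-generates $E_{u,v}$---equivalently, that the edge $(sv,v)$ lies in the diamond-closure of $E_{u,sv}$ (then a maximal chain argument finishes). Your propagation scheme starts from the seed $(u,su)\in E_{u,sv}$ and runs the matched diamonds $\{w,w',sw,sw'\}$ along covers $w\lessdot w'$ in $B=\{w\in[u,v]:sw>w\}$. But this only reaches those $w\in B$ joined to $u$ by a saturated chain lying \emph{entirely in $B$}; to hit $(sv,v)$ you would need a maximal chain from $u$ to $sv$ with no $s$-descents along the way. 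There is no evident reason such a chain exists, and you have not produced one. This is not bookkeeping---it is the whole difficulty, and your parabolic decomposition $B\sqcup sB$ by itself does not resolve it.

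The paper takes a different route. It fixes a \emph{Deodhar} reflection order $\prec$ (one with $N(v)$ an initial section), forms the longest increasing path $\gamma_0=(u=x_0\to x_1\to\cdots\to v)$, and works with the \emph{atom} interval $[x_1,v]$ rather than your coatom interval $[u,sv]$. The candidate generating set is $F_{u,v}=\{(x_i,x_{i+1}):d_{x_i,v}=d_{x_{i+1},v}+1\}$; the Deodhar hypothesis forces $d_{x_i,v}$ to be weakly decreasing along $\gamma_0$, so $|F_{u,v}|=d_{u,v}$. The hard case $d_{u,v}=d_{x_1,v}$ is exactly dual to your $su\le sv$, and here the paper proves there is a \emph{unique} second-length increasing path $\gamma$ diverging from $\gamma_0$ at $u$, then builds from it a \emph{supporting chain}: a saturated chain in $[x_1,v]$ whose elements each cover a vertex of $\gamma$, terminating at an atom of the length-$3$ edge of $\gamma$. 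The diamonds linking $\gamma$ to this chain force $(u,x_1)$ into the diamond-closure of $E_{x_1,v}$. The existence of the supporting chain is itself an induction on $\ell(v)$ using the upper $s$-conjugate order $\prec^s$. This construction---extracting a specific second-length path and running a parallel chain alongside it---is the missing mechanism in your argument, and I do not see how to supply it from $B\sqcup sB$ alone without something of comparable strength.
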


\Cref{thm:d-equals-g} was proven by Patimo \cite{Patimo-q-coefficient} in the case where $W$ is finite and simply-laced. His argument relied on the \emph{generalized lifting property} \cite{Tsukerman-Williams} which is known \cite{caselli-sentinelli-gen-lifting} to hold only in this case. Our proof in \Cref{sec:proof} introduces new techniques using divergences of increasing paths to circumvent this.

\begin{corollary}
\label{cor:cic-for-coefficients}
Let $u \leq v \in W$ and $u' \leq v' \in W'$ be such that $[u,v] \cong [u',v']$. Then $[q]P_{u,v}=[q]P_{u',v'}$, $[q]R_{u,v}=[q]R_{u',v'}$, $[q^{\ell(u,v)-1}]R_{u,v}=[q^{\ell(u',v')-1}]R_{u',v'}$, and $[q^{\ell(u,v)}-2]\tR_{u,v}=[q^{\ell(u',v')}-2]\tR_{u',v'}$.
\end{corollary}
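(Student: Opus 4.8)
The plan is to derive Corollary~\ref{cor:cic-for-coefficients} from Theorem~\ref{thm:d-equals-g} essentially by unwinding the standard dictionary between the various families of polynomials, all of whose relevant coefficients must be expressed in terms of the combinatorial invariant $g_{u,v}$ (and the length $\ell(u,v)$, which is itself a poset invariant since $\ell(u,v)$ is the length of a maximal chain in $[u,v]$). Since $g_{u,v}$ depends only on the isomorphism type of $[u,v]$, any quantity that can be written as a function of $g_{u,v}$ and $\ell(u,v)$ is automatically a combinatorial invariant; so the whole corollary reduces to four explicit formulas.

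First I would handle $[q^{\ell(u,v)-1}]R_{u,v}$: by the very definition of the $d$-invariant in the excerpt, this coefficient equals $-d_{u,v}$, which by Theorem~\ref{thm:d-equals-g} equals $-g_{u,v}$, manifestly a poset invariant. Next, for $\widetilde{R}_{u,v}$, I would recall the relation $R_{u,v}(q) = q^{\ell(u,v)/2}\,\widetilde{R}_{u,v}(q^{1/2}-q^{-1/2})$ (equivalently $\widetilde R_{u,v}(q-q^{-1}) = q^{-\ell(u,v)/2}R_{u,v}(q)$ in Brenti's normalization), so that comparing coefficients of $q^{\ell(u,v)}$ and $q^{\ell(u,v)-1}$ on both sides shows that $\widetilde R_{u,v}$ is monic of degree $\ell(u,v)$ and that $[q^{\ell(u,v)-1}]\widetilde R_{u,v} = [q^{\ell(u,v)-1}]R_{u,v} = -g_{u,v}$ as well; I should double-check the sign conventions and whether the statement's exponent ``$q^{\ell(u,v)}-2$'' is a typo for $q^{\ell(u,v)-2}$, and if the claim is really about the $q^{\ell(u,v)-2}$ coefficient, expand the substitution one order further to see that $[q^{\ell(u,v)-2}]\widetilde R_{u,v}$ is a fixed polynomial in $g_{u,v}$ and $\ell(u,v)$ (the $q^2$-coefficient of $\widetilde R$ being already known to be invariant, this is consistent with Dyer's result cited in the introduction). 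Finally, for $[q]P_{u,v}$ I would use the defining recursion/inversion relating $P$ and $R$: writing $q^{\ell(u,v)}P_{u,v}(1/q) - P_{u,v}(q) = \sum_{u<w\le v} R_{u,w}(q)P_{w,v}(q)$, extracting the coefficient of $q^{\ell(u,v)-1}$ gives $-[q]P_{u,v} = \sum_{u<w\le v}[q^{\ell(u,v)-1}]\big(R_{u,w}(q)P_{w,v}(q)\big)$; the only terms contributing at this degree are $w$ with $\ell(w,v)\le 1$, namely $w=v$ (contributing $[q^{\ell(u,v)-1}]R_{u,v} = -g_{u,v}$) and the atoms $w$ of $[u,v]$ with $\ell(w,v)=1$ (each contributing $[q^{\ell(u,v)-2}]R_{u,w}\cdot[q]P_{w,v}$, but $P_{w,v}=1$ since $\ell(w,v)=1$, so each contributes $[q^{\ell(u,w)-1}]R_{u,w} = -g_{u,w}$). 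This yields a clean closed form $[q]P_{u,v} = g_{u,v} - \sum_{w \gtrdot\, \text{nothing}} \cdots$ — more precisely $[q]P_{u,v} = g_{u,v} - \sum_{w}\, g_{u,w}$ where $w$ ranges over the coatoms of $[u,v]$ — and every term on the right is a poset invariant, so $[q]P_{u,v}$ is too; equality across isomorphic intervals follows immediately.

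Carrying out this plan, the main obstacle is bookkeeping rather than mathematics: one must be scrupulous about normalization conventions (which $R$ vs. $\widetilde R$, and whether degrees are indexed by $\ell(u,v)$ or $\ell(v)-\ell(u)$), about the precise form of the $R$--$P$ inversion formula being used, and about exactly which intermediate elements $w$ contribute to each low-order coefficient — a single off-by-one in a degree count would invalidate the contribution analysis. The genuinely substantive input is entirely Theorem~\ref{thm:d-equals-g}; once $d_{u,v}=g_{u,v}$ is in hand, each of the four assertions is a finite, degree-bounded coefficient extraction, and the combinatorial invariance is inherited termwise from the manifest invariance of the numbers $g_{u,w}$ and the lengths $\ell(u,w)$ over $w$ in the (isomorphic) intervals. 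I would also remark that this gives Conjecture~\ref{conj:cic} in full for $\ell(u,v)\le 6$: combined with the known invariance of the $q^0$, $q^1$ (equivalently $q^{\ell-1}$), and $q^2$ coefficients of $\widetilde R$, plus the palindromicity-type symmetry $R_{u,v}(q) = q^{\ell(u,v)}R_{u,v}(1/q)$ forced on $\widetilde R$ by its definition, one controls enough coefficients of $\widetilde R_{u,v}$ to determine it entirely when its degree is at most $6$, and hence determines $R_{u,v}$ and $P_{u,v}$.
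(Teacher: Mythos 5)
Your approach --- reduce each coefficient to an expression in $d_{u,v}$ and poset data, then invoke \Cref{thm:d-equals-g} --- is exactly the paper's; the paper simply cites \Cref{lem:dincarnations}, which records all four needed identities. Where you depart is in attempting to rederive those identities on the fly, and several of the rederivations go wrong in precisely the bookkeeping you flagged as dangerous.

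Concretely: in the $[q]P_{u,v}$ computation, extracting $[q^{\ell(u,v)-1}]$ from $q^{\ell(u,v)}P_{u,v}(1/q)-P_{u,v}(q)$ gives $+[q]P_{u,v}$, not $-[q]P_{u,v}$ (the $-P_{u,v}(q)$ term contributes nothing since $\deg P_{u,v}<\tfrac12\ell(u,v)$); and for a coatom $w$ one has $\ell(u,w)=\ell(u,v)-1$, so its contribution is $[q^{\ell(u,v)-1}]R_{u,w}=[q^{\ell(u,w)}]R_{u,w}=1$ (the leading coefficient), not $[q^{\ell(u,w)-1}]R_{u,w}=-g_{u,w}$. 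The correct closed form is $[q]P_{u,v}=|\{c\in[u,v]:\ell(c,v)=1\}|-g_{u,v}$, matching \Cref{lem:dincarnations}(a); your formula $g_{u,v}-\sum_w g_{u,w}$ already fails for $\ell(u,v)=1$. Similarly, the claim $[q^{\ell(u,v)-1}]\tR_{u,v}=-g_{u,v}$ cannot be right since $\tR_{u,v}$ only has terms of degree congruent to $\ell(u,v)\pmod 2$, so that coefficient is $0$; the actual identity (\Cref{lem:dincarnations}(d)) is $[q^{\ell(u,v)-2}]\tR_{u,v}=\ell(u,v)-d_{u,v}$. Finally, you never address the assertion $[q]R_{u,v}=[q]R_{u',v'}$, which requires the palindromic symmetry $q^{\ell(u,v)}R_{u,v}(1/q)=(-1)^{\ell(u,v)}R_{u,v}(q)$ (note the sign, missing from your remark) to yield $[q]R_{u,v}=(-1)^{\ell(u,v)-1}d_{u,v}$. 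None of these errors changes the ultimate conclusion --- the corrected formulas are still poset-invariant given $d_{u,v}=g_{u,v}$ --- but as written the derivations do not go through, and citing \Cref{lem:dincarnations} would avoid the issue entirely.
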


Although $P_{u,v}$ can have a quadratic term when $5 \leq \ell(u,v)\leq 6$, we are nevertheless able to prove \Cref{conj:cic} for such intervals.

\begin{corollary}
\label{cor:cic-length-5-6}
Let $u \leq v$ be elements of any Coxeter group $W$ and suppose $\ell(u,v) \leq 6$. Let $u' \leq v'$ be elements of a Coxeter group $W'$ such that $[u,v] \cong [u',v']$. Then $P_{u,v}=P_{u',v'}$, $R_{u,v}=R_{u',v'}$, and $\tR_{u,v}=\tR_{u',v'}$. 
\end{corollary}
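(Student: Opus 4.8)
The plan is to prove, by induction on $\ell := \ell(u,v)$, that $R_{u,v}$, $\tR_{u,v}$, and $P_{u,v}$ are combinatorial invariants whenever $\ell \le 6$; since a poset isomorphism $[u,v] \cong [u',v']$ forces $\ell(u,v) = \ell(u',v')$, this yields all three claimed equalities simultaneously. The base case $\ell \le 4$ is the known case: by Dyer's interpretation \cite{Dyer1987}, the coefficients of $q^0$, $q^1$, and $q^2$ in $\tR_{u,v}$ are combinatorial invariants, and together with $[q^\ell]\tR_{u,v} = 1$ and the parity vanishing $[q^i]\tR_{u,v} = 0$ for $i \not\equiv \ell \pmod 2$ these determine $\tR_{u,v}$ when $\ell \le 4$ — hence $R_{u,v}$, and hence $P_{u,v}$ via the recursion used below.

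For the inductive step with $\ell \in \{5,6\}$, I would first show $\tR_{u,v}$ — equivalently $R_{u,v}$, since $R_{u,v}(q) = \sum_i \bigl([q^i]\tR_{u,v}\bigr)\,q^{(\ell - i)/2}(q-1)^i$ — is a combinatorial invariant. Its only coefficients not forced to vanish by parity and degree are $[q^\ell]\tR_{u,v} = 1$, the coefficient $[q^{\ell - 2}]\tR_{u,v}$, and $[q^i]\tR_{u,v}$ for $i \le \ell - 4 \le 2$. The coefficient $[q^{\ell - 2}]\tR_{u,v}$ is a combinatorial invariant by \Cref{cor:cic-for-coefficients}; concretely, comparing coefficients of $q^{\ell - 1}$ in the relation above gives $[q^{\ell - 2}]\tR_{u,v} = \ell - d_{u,v} = \ell - g_{u,v}$, using \Cref{thm:d-equals-g}. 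The coefficients $[q^i]\tR_{u,v}$ with $i \le 2$ are combinatorial invariants by Dyer. Hence $\tR_{u,v}$ is a combinatorial invariant. (In the even case $\ell = 6$ one can alternatively invoke the $R$-polynomial inversion formula $\sum_{u \le z \le v}(-1)^{\ell(u,z)}R_{u,z}(q)R_{z,v}(q) = 0$, in which the $z = u$ and $z = v$ terms sum to $2R_{u,v}(q)$, expressing $R_{u,v}$ through $R$-polynomials of proper subintervals.)

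Next I would deduce that $P_{u,v}$ is a combinatorial invariant from the defining recursion
\[ q^{\ell}\,P_{u,v}(q^{-1}) - P_{u,v}(q) = \sum_{u < w \le v} R_{u,w}(q)\,P_{w,v}(q). \]
Since $\ell \ge 5$, we have $\deg P_{u,v} \le \lfloor (\ell - 1)/2 \rfloor = 2$, so $P_{u,v} = 1 + \bigl([q]P_{u,v}\bigr)q + \bigl([q^2]P_{u,v}\bigr)q^2$, and $[q]P_{u,v}$ is a combinatorial invariant by \Cref{cor:cic-for-coefficients}. Because $q^\ell P_{u,v}(q^{-1})$ has lowest degree $\ell - 2 \ge 3$, the coefficient of $q^2$ on the left side of the displayed identity is exactly $-[q^2]P_{u,v}$. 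On the right side, the $w = v$ summand contributes $[q^2]R_{u,v}$, a combinatorial invariant by the previous paragraph, and each summand with $u < w < v$ involves only the intervals $[u,w]$ and $[w,v]$, both of length strictly less than $\ell$, hence is a combinatorial invariant by the induction hypothesis. Therefore $[q^2]P_{u,v}$ — and with it $P_{u,v}$ — is a combinatorial invariant, completing the induction.

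I expect the genuine obstacle to be the first step in the case $\ell = 5$. There $[q]\tR_{u,v}$ is the single coefficient of $\tR_{u,v}$ not controlled by \Cref{cor:cic-for-coefficients}: the $R$-polynomial inversion formula collapses to the trivial identity $0 = 0$ in odd rank, and the Kazhdan--Lusztig recursion only links $[q]\tR_{u,v}$ to $[q^2]P_{u,v}$ without pinning down either, so one must appeal to Dyer's combinatorial description of the low-degree part of $\tR_{u,v}$. The bound $\ell \le 6$ is sharp for this line of argument: already at $\ell = 7$ one would additionally need the combinatorial invariance of $[q^3]\tR_{u,v}$, which is not presently known. Everything else is routine bookkeeping with the two recursions above.
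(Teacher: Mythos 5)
Your proof is correct and follows essentially the same route as the paper: establish the combinatorial invariance of $\tR_{u,v}$ for $\ell(u,v)\le 6$ by combining the new result on $[q^{\ell-2}]\tR_{u,v}$ (\Cref{cor:cic-for-coefficients}) with Dyer's invariance of $[q^1]\tR_{u,v}$ and $[q^2]\tR_{u,v}$, the monicity in degree $\ell$, and the parity constraint; then pass to $R_{u,v}$ via the change of variables; and finally deduce $P_{u,v}$ from the KL recursion using that the $R$-polynomials of all subintervals of $[u,v]$ are combinatorially invariant. The only difference is cosmetic: you carry out an explicit coefficient extraction for $[q^2]P_{u,v}$ and set the argument up as a formal induction, whereas the paper simply notes that $P_{u,v}$ is determined by the $R_{x,y}$ for subintervals; your aside about the $R$-polynomial inversion formula is an unnecessary alternative and the closing paragraph somewhat overstates the $\ell=5$ case as an ``obstacle'' when in fact Dyer's result, which you already invoke, disposes of it.
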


\begin{table}
    \centering
    \begin{tabular}{c|c|c|c|c|c|c|c}
         &  $q^0$ & $q^1$ & $q^2$ & $\cdots$ & $q^{\ell(u,v)-2}$ & $q^{\ell(u,v)-1}$ & $q^{\ell(u,v)}$  \\
         \hline
       $P_{u,v}$  & \cite{Kazhdan-Lusztig-1} & \Cref{thm:d-equals-g} & ? & ? & $-$ & $-$ & $-$  \\
       \hline 
       $R_{u,v}$  & \cite{Kazhdan-Lusztig-1} & \Cref{thm:d-equals-g} & ? & ? & ? & \Cref{thm:d-equals-g} & \cite{Kazhdan-Lusztig-1}  \\
       \hline
       $\tR_{u,v}$ & \cite{Kazhdan-Lusztig-1} & \cite{Dyer1987} & \cite{Dyer1987} & ? & \Cref{thm:d-equals-g} & $-$ & \cite{Kazhdan-Lusztig-1}    \\
       \hline
    \end{tabular}
    \caption{Coefficients of the $P$-, $R$-, and $\tR$-polynomials which are known to be combinatorially invariant for all Coxeter groups. 
    } 
    \label{tab:knowncoefficients}
\end{table}

\begin{remark}
Recent results \cite{casals-gorsky-gorsky-le-shen-simental, galashin2024braidvarietyclusterstructuresI,galashin2025braidvarietyclusterstructures} have established that the coordinate ring of a Richardson variety is a \emph{cluster algebra} \cite{fomin-zelevinsky-cluster-algebras-1}. Although we do not need this fact, it is interesting to note that, when $W$ is a finite Weyl group, the specific diamond-generating set that we construct in the proof of \Cref{thm:d-equals-g} corresponds to the \emph{frozen variables} in this cluster structure. Thus we also obtain the combinatorial invariance of the numbers of frozen and of mutable variables in this cluster structure (see \Cref{sec:cluster}).
\end{remark}

\subsection{Extensions of Verma modules}
Let $\mathfrak{g}$ be a complex semisimple Lie algebra with Borel subalgebra $\mathfrak{b}$ and Weyl group $W$. For $w \in W$, let $M_w$ be the Verma module of highest weight $w\lambda-\rho$, where $\lambda$ is any anti-dominant regular integral weight and $\rho$ is the half-sum of the positive roots. Inspired by \cite{gabber-joseph}, the \emph{Gabber--Joseph conjecture}\footnote{Despite this standard name, the conjecture does not actually appear in the work of Gabber--Joseph which inspired it.} asserted that for $u \leq v$ the polynomial
\[
R'_{u,v}(q) \coloneqq \sum_{i=0}^{\ell(u,v)} (-1)^{\ell(u,v)-i} q^i \dim \Ext_{\mathcal{O}}^i(M_u,M_v),
\] 
is in fact equal to $R_{u,v}(q)$, where $\Ext_{\mathcal{O}}$ denotes extensions in the BGG category $\mathcal{O}$. It was shown by Verma in \cite{Verma} that $[q^0]R'_{u,v}=[q^0]R_{u,v}$ and it was shown by Carlin in \cite{Carlin} that $[q^{\ell(u,v)}]R'_{u,v} = [q^{\ell(u,v)}]R_{u,v}$. A counterexample to the full conjecture was eventually found by Boe \cite{Boe}. However, we prove the assertion for the coefficient of $q^{\ell(u,v)-1}$, and thereby also establish the combinatorial invariance of the corresponding $\Ext$ group.

\begin{theorem}
\label{thm:gj}
Let $u \leq v$ be elements of the Weyl group $W$ of a complex semisimple Lie algebra $\mathfrak g$. Then 
\[
\dim \Ext_{\mathcal{O}}^{\ell(u,v)-1}(M_u,M_v)=d_{u,v}. 
\]
Consequently, $\dim \Ext_{\mathcal{O}}^{\ell(u,v)-1}(M_u,M_v)$ is a combinatorial invariant of the poset $[u,v]$.
\end{theorem}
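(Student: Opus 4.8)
The plan is to deduce the theorem from \Cref{thm:d-equals-g} together with one new piece of representation theory: that the Gabber--Joseph prediction holds not just in the top degree (Carlin \cite{Carlin}) and the bottom degree (Verma \cite{Verma}) but also in the \emph{second-highest} degree. Indeed, since $\Ext^i_{\mathcal{O}}(M_a,M_b)=0$ for $i>\ell(a,b)$, Carlin's result says $R'_{u,v}$ is monic of degree $\ell(u,v)$, and by construction $[q^{\ell(u,v)-1}]R'_{u,v}=-\dim\Ext^{\ell(u,v)-1}_{\mathcal{O}}(M_u,M_v)$, whereas $[q^{\ell(u,v)-1}]R_{u,v}=-d_{u,v}$ by the definition of the $d$-invariant. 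Hence it suffices to prove the single identity $\dim\Ext^{\ell(u,v)-1}_{\mathcal{O}}(M_u,M_v)=d_{u,v}$; the asserted combinatorial invariance is then immediate from \Cref{cor:cic-for-coefficients} (equivalently from $d_{u,v}=g_{u,v}$, which is manifestly a poset invariant). This identity already holds when $\ell(u,v)\le 1$, since there $\dim\Ext^0_{\mathcal{O}}(M_u,M_v)=\dim\operatorname{Hom}_{\mathcal{O}}(M_u,M_v)=1=d_{u,v}$.

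For the inductive step I would exploit the derived self-equivalences $\Phi_s$ of $D^b(\mathcal{O})$, $s\in S$, furnished by (derived) twisting/completion functors: $\Phi_s M_w\simeq M_{sw}$ whenever $sw<w$, while if $sw>w$ then $\Phi_s M_w$ is a complex fitting in a distinguished triangle built from $M_{sw}$ and a homological shift of $M_w$. Fix $s$ with $sv<v$ and induct on $\ell(v)$. In the \emph{easy case} $su<u$, both $M_u$ and $M_v$ are carried to Vermas by $\Phi_s$, so $\Ext^i_{\mathcal{O}}(M_u,M_v)\cong\Ext^i_{\mathcal{O}}(M_{su},M_{sv})$ for all $i$; since $R_{u,v}=R_{su,sv}$ (so $d_{u,v}=d_{su,sv}$) while $\ell(su,sv)=\ell(u,v)$, we conclude by induction. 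In the \emph{hard case} $su>u$, the triangle for $\Phi_s M_u$ produces a long exact sequence linking $\Ext^\bullet_{\mathcal{O}}(M_u,M_v)$, $\Ext^\bullet_{\mathcal{O}}(M_{su},M_{sv})$ (an interval of length $\ell(u,v)-2$), and $\Ext^\bullet_{\mathcal{O}}(M_u,M_{sv})$ (length $\ell(u,v)-1$) --- the categorification of the recursion $R_{u,v}=qR_{su,sv}+(q-1)R_{u,sv}$, whose analysis in the top two coefficients gives $d_{u,v}=d_{u,sv}$.

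The crux, and the expected main obstacle, is to read off $\dim\Ext^{\ell(u,v)-1}_{\mathcal{O}}(M_u,M_v)$ from this long exact sequence in the hard case. The leverage is the vanishing ranges $\Ext^j_{\mathcal{O}}(M_{su},M_{sv})=0$ for $j>\ell(u,v)-2$ and $\Ext^j_{\mathcal{O}}(M_u,M_{sv})=0$ for $j>\ell(u,v)-1$: together with $\dim\Ext^{\ell(u,v)}_{\mathcal{O}}(M_u,M_v)=\dim\Ext^{\ell(u,v)-1}_{\mathcal{O}}(M_u,M_{sv})=1$ they pin down the homological shift appearing in the sequence and, one step further down, should give $\Ext^{\ell(u,v)-1}_{\mathcal{O}}(M_u,M_v)\cong\Ext^{\ell(u,v)-2}_{\mathcal{O}}(M_u,M_{sv})$, whose dimension is $d_{u,sv}=d_{u,v}$ by induction. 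The difficulty is that when $su\le sv$ a copy of $\Ext^{\ell(u,v)-2}_{\mathcal{O}}(M_{su},M_{sv})\cong\mathbb{C}$ --- the \emph{top} $\Ext$ of the shorter interval --- can enter the long exact sequence precisely at homological degree $\ell(u,v)-1$, so one must show the relevant connecting map is exactly injective (or surjective) so that the dimension count is unperturbed. This is exactly the place where the naive Gabber--Joseph identity is \emph{allowed} to fail in lower degrees, as in Boe's counterexample \cite{Boe}, so the argument must genuinely use that we sit in the top two homological degrees: it should come out by diagram-chasing the long exact sequence against the vanishing ranges above and the known one-dimensionality of $\Ext^0$ and $\Ext^{\ell(u,v)}$; if a cleaner treatment is wanted, one can instead work in the graded lift of $\mathcal{O}$, which is standard Koszul (so each $M_u$ has a linear projective resolution), and invoke purity of the top two $\Ext$ groups to control internal degrees. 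Granting this, $\dim\Ext^{\ell(u,v)-1}_{\mathcal{O}}(M_u,M_v)=d_{u,v}$, and the theorem follows.
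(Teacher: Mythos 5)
Your approach is genuinely different from the paper's, and it contains a gap that you yourself flag but do not close. The paper proves the theorem geometrically: it invokes Riche--Soergel--Williamson's identification $\Ext_{\mathcal{O}}^\ast(M_u,M_v) \cong H_c^{\ast+\ell(u,v)}(\cR_{u,v})$ of $\Ext$'s of Vermas with compactly-supported cohomology of open Richardson varieties, and then computes $\dim H^{2\ell(u,v)-1}_c(\cR_{u,v};\mathbb{C}) = \dim H^1(\cR_{u,v};\mathbb{C})$ directly via mixed Hodge theory (\Cref{prop:van} and \Cref{cor:duv}). The key input is that $H^{1,(0,0)}(\cR_{u,v})$ and $H^{2,(1,1)}(\cR_{u,v})$ vanish, proved by inducting through Riche--Soergel--Williamson's stratification $\cR_{u,v}=U\sqcup Z$ and chasing a residue class through the Gysin sequence; the $\mathbb{F}_q$-point-count interpretation of $R_{u,v}$ then identifies $d_{u,v}$ with $\dim H^1(\cR_{u,v})$ in one shot. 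You instead try to stay inside category $\mathcal{O}$, categorifying the $R$-polynomial recursion via twisting/completion functors --- essentially the strategy Carlin used for the top degree. In principle both routes lead to the same place; the advantage of the paper's is that the purity/vanishing statement for a smooth variety replaces any connecting-map analysis.

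The gap in your proposal is at exactly the place you call the "crux." In the hard case $su>u$, $sv<v$ with $su\leq sv$, a copy of $\Ext^{\ell(u,v)-2}_{\mathcal{O}}(M_{su},M_{sv})\cong\mathbb{C}$ (the top $\Ext$ of an interval of length $\ell(u,v)-2$) sits at the critical degree of your long exact sequence, and you need the connecting map to absorb it precisely. You say this "should come out by diagram-chasing" or, alternatively, from Koszulity of the graded lift and purity of the top two $\Ext$ groups --- but you never actually produce the argument. This is not a formality: Boe's counterexample shows the identity fails already in degree $\ell(u,v)-2$, so any proof must use something genuinely special about the top two homological degrees, and that special input is precisely what is missing. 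There is also a smaller bookkeeping slip: you assert that the top-two-coefficient analysis gives $d_{u,v}=d_{u,sv}$, but by \Cref{lem:drecurrence} this only holds when $su\leq sv$; when $su\not\leq sv$ one has $d_{u,v}=d_{u,sv}+1$, and your final identification $\Ext^{\ell(u,v)-1}(M_u,M_v)\cong\Ext^{\ell(u,v)-2}(M_u,M_{sv})$ would then be off by one dimension, which would have to be supplied by the vanishing of $\Ext^\bullet(M_{su},M_{sv})$ --- another case you would need to carry out explicitly rather than gesture at.
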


We prove \Cref{thm:gj} by establishing new vanishing results for the mixed Hodge structure of open Richardson varieties (see \Cref{prop:van}), which may be of independent interest.

\subsection{Outline}
In \Cref{sec:background} we recall background on Kazhdan--Lusztig polynomials, Bruhat graphs and reflection orders, and Richardson varieties. In \Cref{sec:proof} we introduce \emph{divergences} of increasing paths and prove \Cref{thm:d-equals-g}. In \Cref{sec:gj} we combine this with some geometric arguments to prove \Cref{thm:gj}. In \Cref{sec:cluster} we relate our results to the cluster structure on Richardson varieties, and derive some corollaries.

\section{Preliminaries}
\label{sec:background}

Let $W$ be a Coxeter group with simple generators $S$ and reflections $T$. We write $\ell$ for the length function on $W$ and $\leq$ for Bruhat order.

\subsection{Kazhdan--Lusztig polynomials} We now define the $R$-, $\tR$-, and $P$-polynomials.

\begin{theorem}[Kazhdan--Lusztig \cite{Kazhdan-Lusztig-1}]
    There is a unique family of polynomials $(R_{u,v})_{u,v\in W}$ from $\mathbb{Z}[q]$ so that for any $u,v\in W$, the following hold:
    \begin{itemize}
        \item If $u\not\leq v$, then $R_{u,v}=0$.
        \item If $u=v$, then $R_{u,v}=1$.
        \item If $u<v$, and $s\in S$ is such that $sv<v$, then
        \[ R_{u,v} = \begin{cases}
            R_{su,sv} &\text{if $su<u$} \\ 
            (q-1)R_{u,sv} + qR_{su,sv} &\text{if $su>u$}
        \end{cases}. \]
    \end{itemize}
\end{theorem}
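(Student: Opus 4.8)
The plan is to prove uniqueness by a short induction on $\ell(v)$, and existence by realizing the $R_{u,v}$ as structure coefficients of the bar involution on the Hecke algebra of $W$.

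For uniqueness, suppose two families $(R_{u,v})$ and $(R'_{u,v})$ both satisfy the three listed conditions; we show $R_{u,v}=R'_{u,v}$ by induction on $\ell(v)$. If $u\not\le v$ both are $0$, and if $u=v$ both are $1$; this covers the base case $v=e$ and disposes of those cases in general. If $u<v$ then $\ell(v)\ge 1$, so there is some $s\in S$ with $sv<v$, and the third condition writes $R_{u,v}$ as one of the two explicit polynomial expressions in $R_{su,sv}$ and $R_{u,sv}$; since $\ell(sv)<\ell(v)$, the inductive hypothesis gives $R_{su,sv}=R'_{su,sv}$ and $R_{u,sv}=R'_{u,sv}$, hence $R_{u,v}=R'_{u,v}$. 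Note that this argument needs only that \emph{some} valid left descent $s$ of $v$ exists; it does not require the recursion to be independent of the choice of $s$.

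For existence, let $\mathcal{H}$ be the Hecke algebra of $(W,S)$ over $\mathbb{Z}[q,q^{-1}]$, the free module with basis $\{T_w:w\in W\}$ and multiplication determined by $T_sT_w=T_{sw}$ if $sw>w$ and $T_sT_w=qT_{sw}+(q-1)T_w$ if $sw<w$; associativity is the standard fact that these relations follow from, and are consistent with, the Coxeter presentation of $W$. Each $T_s$ is invertible with $T_s^{-1}=q^{-1}T_s+(q^{-1}-1)$, so each $T_w$ is invertible. Define the bar involution $\overline{\,\cdot\,}\colon\mathcal{H}\to\mathcal{H}$ by $\overline{q}=q^{-1}$ and $\overline{T_w}=(T_{w^{-1}})^{-1}$; one checks this is a well-defined ring involution by verifying that the assignment $q\mapsto q^{-1}$, $T_s\mapsto T_s^{-1}$ respects the quadratic and braid relations. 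Since $\{\overline{T_w}\}$ is again a basis, expanding $\overline{T_v}$ in the basis $\{T_u\}$ one checks (using a reduced word for $v$ and the formula for $T_s^{-1}$) that only $u\le v$ occur and that, after rescaling by $q^{\ell(v)}$ and introducing alternating signs, the coefficients lie in $\mathbb{Z}[q]$; declare these to be the $R_{u,v}$. The vanishing for $u\not\le v$ is then the support statement just made; extracting the top-degree coefficient when $u=v$ gives $R_{v,v}=1$; and the recursion comes from the identity $\overline{T_v}=\overline{T_s}\cdot\overline{T_{sv}}$ when $sv<v$: substitute $\overline{T_s}=q^{-1}T_s+(q^{-1}-1)$ and the defining expansion of $\overline{T_{sv}}$, expand via the multiplication rule for $T_sT_u$ (splitting on $su<u$ versus $su>u$), and compare coefficients of $T_u$. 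Crucially, since $\overline{T_v}$ is a single well-defined element of $\mathcal{H}$, the resulting recursion holds for \emph{every} $s$ with $sv<v$, so the consistency implicit in the theorem statement comes for free.

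The only substantive point is the well-definedness of the bar involution, that is, that $q\mapsto q^{-1}$, $T_s\mapsto T_s^{-1}$ extends to a ring homomorphism: this reduces to checking that the images satisfy the quadratic relation $(T_s^{-1})^2=(q^{-1}-1)T_s^{-1}+q^{-1}$ (immediate from the quadratic relation for $T_s$) and the braid relations (using that inverting a braid word reverses it while a braid relation is palindromic). Everything after that is routine manipulation of the multiplication rules. As an alternative that avoids $\mathcal{H}$ altogether, one can \emph{define} $R_{u,v}$ by the recursion relative to a fixed choice of left descent of $v$ and then prove directly that the same value results for any other left descent $t$; this comes down to a case analysis on the relative Bruhat positions of $u$, $su$, $tu$, $stu$ using the exchange and lifting properties, and in that route this commutation check is the main obstacle.
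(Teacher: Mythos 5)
The paper states this result as a citation to Kazhdan--Lusztig's original article and gives no proof of its own, so there is no argument in the paper to compare against. Your proof is the standard one and it is correct: uniqueness by induction on $\ell(v)$ using any left descent, and existence via the bar involution on the Hecke algebra, with $R_{u,v}$ read off (after the usual sign and $q$-power normalization) from the expansion of $\overline{T_v}=(T_{v^{-1}})^{-1}$ in the $T$-basis. Your treatment of the one nonroutine point --- that $q\mapsto q^{-1}$, $T_s\mapsto T_s^{-1}$ respects the defining relations, with the braid relation handled by inverting and using its palindromic structure --- is exactly right, and the observation that the recursion then holds for \emph{every} left descent $s$ of $v$ because $\overline{T_v}$ is a single element of $\mathcal H$ is the key conceptual payoff of this route. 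One small slip of phrasing: ``extracting the top-degree coefficient when $u=v$'' is not quite how $R_{v,v}=1$ drops out --- rather, the coefficient of $T_v$ in $(T_{v^{-1}})^{-1}$ is $q^{-\ell(v)}$, and the chosen normalization turns that into $1$ --- but this does not affect the substance. The alternative purely combinatorial route you mention (fix one descent, then verify independence of the choice by a case analysis with the lifting property) is also viable and is closer in spirit to some textbook treatments, but it trades a single clean algebraic well-definedness check for a more intricate combinatorial one.
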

As noted by Brenti and by Dyer, it follows from the form of this recurrence that there are unique polynomials $\tR_{u,v} \in \mathbb{N}[q]$ such that $R_{u,v}(q) = q^{\frac{\ell(u,v)}{2}} \tR_{u,v}(q^{1/2}-q^{-1/2})$. 
\begin{theorem}[Kazhdan--Lusztig \cite{Kazhdan-Lusztig-1}] \label{thm:KLexistence}
    There is a unique family of polynomials $(P_{u,v})_{u,v\in W}$ so that for any $u,v\in W$, the following hold:
    \begin{itemize}
        \item If $u\not\leq v$, then $P_{u,v}=0$.
        \item If $u=v$, then $P_{u,v}=1$.
        \item If $u<v$, then $\deg P_{u,v} < \frac{1}{2}\ell(u,v)$, and
        \[ q^{\ell(u,v)}P_{u,v}(q^{-1})-P_{u,v}(q) = \sum_{x\in (u,v]} R_{u,x}P_{x,v}. \]
    \end{itemize}
\end{theorem}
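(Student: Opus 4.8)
The plan is to prove existence and uniqueness together by induction on $n \coloneqq \ell(u,v)$, following the classical argument of Kazhdan--Lusztig. The cases $u \not\leq v$ and $u = v$ are forced by the first two conditions, so fix $u < v$ and assume that $P_{x,y}$ has already been uniquely determined whenever $\ell(x,y) < n$; in particular every $P_{x,v}$ with $x \in (u,v]$ is known, so $C(q) \coloneqq \sum_{x \in (u,v]} R_{u,x}P_{x,v}$ is a fixed polynomial. The third condition then reduces to the single requirement that $P_{u,v}$ be a polynomial of degree $< n/2$ with
\[
q^{n}P_{u,v}(q^{-1}) - P_{u,v}(q) = C(q).
\]
Uniqueness is immediate: the difference $D$ of two solutions satisfies $q^{n} D(q^{-1}) = D(q)$, so its coefficient sequence is symmetric about $n/2$, which together with $\deg D < n/2$ forces $D = 0$.

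The heart of the matter is the identity $q^{n}C(q^{-1}) = -C(q)$. I would establish it from two standard facts about $R$-polynomials. First, since $R_{u,v}(q) = q^{n/2}\tR_{u,v}(q^{1/2}-q^{-1/2})$, the $R$-recurrence implies by a short induction that every term of $\tR_{u,v}$ has degree $\equiv n \pmod 2$, equivalently $q^{\ell(u,x)}R_{u,x}(q^{-1}) = (-1)^{\ell(u,x)}R_{u,x}(q)$ for all $x \leq v$. Second, I would invoke the $R$-polynomial inversion formula $\sum_{a \leq x \leq b}(-1)^{\ell(a,x)}R_{a,x}R_{x,b} = \delta_{a,b}$, itself a consequence of the bar involution on the Hecke algebra being an involution. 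Then, using $\ell(u,x)+\ell(x,v)=n$ on $[u,v]$ together with the first fact,
\[
q^{n}C(q^{-1}) = \sum_{x \in (u,v]}(-1)^{\ell(u,x)}R_{u,x}(q)\cdot q^{\ell(x,v)}P_{x,v}(q^{-1}),
\]
and I substitute $q^{\ell(x,v)}P_{x,v}(q^{-1}) = P_{x,v}(q) + \sum_{z \in (x,v]}R_{x,z}P_{z,v}$ (the already-known recurrence for $P_{x,v}$), reindex the resulting double sum by $z$, and collapse the inner sums $\sum_{u < x < z}(-1)^{\ell(u,x)}R_{u,x}R_{x,z}$ via the inversion formula. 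The two surviving sums then reorganize exactly into $-C(q)$.

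Granting this identity, existence follows: $q^{n}C(q^{-1}) = -C(q)$ forces $\deg C \leq n$ and, when $n$ is even, $[q^{n/2}]C = 0$, so
\[
P_{u,v}(q) \coloneqq -\sum_{0 \leq i < n/2}\bigl([q^{i}]C\bigr)q^{i}
\]
is a polynomial of degree $< n/2$ satisfying the required equation, and its coefficients are integers since those of the $R$- and (inductively) $P$-polynomials are. I expect the main obstacle to be the identity $q^{n}C(q^{-1}) = -C(q)$: the manipulation itself is bookkeeping, but it genuinely rests on the $R$-polynomial inversion formula, which is the one non-formal ingredient and would need to be cited or re-proved by a parallel induction. (Positivity of the coefficients, $P_{u,v} \in \mathbb{N}[q]$, is not part of this statement and lies considerably deeper.)
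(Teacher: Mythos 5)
The paper states this as a known theorem of Kazhdan--Lusztig \cite{Kazhdan-Lusztig-1} and does not reprove it, so there is no in-paper argument to compare against; your proposal is a correct reconstruction of the standard inductive argument from the cited source (and its textbook treatments). Your handling of uniqueness, the reduction of existence to the identity $q^{n}C(q^{-1}) = -C(q)$, and the verification of that identity via the parity property $q^{\ell(u,x)}R_{u,x}(q^{-1}) = (-1)^{\ell(u,x)}R_{u,x}(q)$ together with the inversion formula $\sum_{u\leq x\leq z}(-1)^{\ell(u,x)}R_{u,x}R_{x,z}=\delta_{u,z}$ are all correct, including the boundary terms at $x=u$ and $x=z$ that must be separated off and then observed to cancel against the first sum; you rightly flag the inversion formula as the one nontrivial input requiring its own justification.
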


We write $[q^a]f$ for the coefficient of $q^a$ in a polynomial $f(q)$.

\subsection{Reflection orders and the Bruhat graph}
Let $\Phi$ be a (reduced) root system for $W$, spanning a vector space $\RR\Phi$, such that the simple roots $\Pi=\{\alpha_s \mid s \in S\}$ form a basis for $\RR\Phi$. Then $\Phi$ decomposes into positive roots $\Phi^+$ and their negations. 

\begin{definition}
    The \newword{Bruhat graph} $\Gamma$ is the directed, edge-labeled graph, with vertex set $W$ and an edge $x\to y$ labeled by $\beta\in \Phi^+$ whenever $t_\beta x = y$ and $\ell(x,y)>0$. We write $\Gamma_{u,v}$ for the induced subgraph on the elements of the Bruhat interval $[u,v]$, $\widehat{\Gamma}_{u,v}$ for its underlying undirected graph, and $E_{u,v}$ for the set of edges of $\widehat{\Gamma}_{u,v}$.
\end{definition}

Dyer used certain orderings of $\Phi^+$ to study the $\tR$-polynomials.

\begin{definition}
    A \newword{reflection order} is a total order $\prec$ on $\Phi^+$ so that, for any triple of roots $\alpha,\beta,\gamma\in \Phi^+$ with $\gamma \in \RR_{>0}\alpha+\RR_{>0}\beta$, we have either 
    \[ \alpha \prec \gamma \prec \beta \qquad\text{or}\qquad \beta \prec \gamma \prec \alpha. \]
\end{definition}

We say a (directed) path $\gamma = (x_0\xrightarrow{\beta_1} x_1 \xrightarrow{\beta_2}\cdots \xrightarrow{\beta_k}x_k)$ in the Bruhat graph $\Gamma$ is an \newword{increasing path} from $x_0$ to $x_k$ (with respect to a reflection order $\prec$) if the edge labels satisfy $\beta_1\prec \beta_2 \prec \cdots \prec \beta_k$. The \newword{length} of $\gamma$ is $\ell(\gamma)\coloneqq k$.

\begin{proposition}[\cite{Dyer1987}]
\label{prop:Rpolyincreasingpaths}
    Fix a reflection order $\prec$. Then for any $u,v\in W$, 
    \[ \tR_{u,v}(q)  = \sum_{\substack{\gamma}} q^{\ell(\gamma)}, \]
    where the sum is over increasing paths $\gamma$ from $u$ to $v$.
\end{proposition}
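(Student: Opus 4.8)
Write $F^{\prec}_{u,v}(q):=\sum_{\gamma}q^{\ell(\gamma)}$ for the claimed right-hand side, the sum over $\prec$-increasing paths $\gamma$ from $u$ to $v$. The plan is to prove that $F^{\prec}_{u,v}$ obeys the recursion characterizing $\tR_{u,v}$ and then to conclude $F^{\prec}_{u,v}=\tR_{u,v}$ by induction on $\ell(v)$. One first records the recursion for $\tR$ itself: plugging $R_{u,v}(q)=q^{\ell(u,v)/2}\tR_{u,v}(q^{1/2}-q^{-1/2})$ into the Kazhdan--Lusztig recursion shows that $\tR$ is the unique family with $\tR_{u,v}=0$ for $u\not\le v$, $\tR_{u,u}=1$, and, whenever $s\in S$ satisfies $sv<v$,
\[
\tR_{u,v}=\begin{cases}\tR_{su,sv}&\text{if }su<u,\\ q\,\tR_{u,sv}+\tR_{su,sv}&\text{if }su>u.\end{cases}
\]
The base cases for $F^{\prec}$ are clear: there are no increasing paths when $u\not\le v$, and only the empty path when $u=v$; so everything reduces to the recursion.

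The main tools are two facts about reflection orders, due to Dyer. First, the $\prec$-minimum and $\prec$-maximum of $\Phi^+$ are simple roots: a non-simple positive root $\beta$ equals $\gamma_1+\gamma_2$ for some $\gamma_1,\gamma_2\in\Phi^+\setminus\{\beta\}$, so $\beta\in\RR_{>0}\gamma_1+\RR_{>0}\gamma_2$ and the reflection-order axiom forces $\beta$ strictly between $\gamma_1$ and $\gamma_2$. Second, if $\alpha_s=\min_{\prec}\Phi^+$, then $s$ permutes $\Phi^+\setminus\{\alpha_s\}$ and there is a reflection order ${}^{s}\!\prec$ which on $\Phi^+\setminus\{\alpha_s\}$ agrees with the transport of $\prec$ along $\beta\mapsto s\beta$ and has $\alpha_s$ as its \emph{maximum} (Dyer's ``conjugation'' of reflection orders; dually one can move the $\prec$-maximum to the minimum). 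Tying these together: left multiplication $x\mapsto sx$ is an automorphism of the directed, edge-labeled subgraph of $\Gamma$ spanned by the edges \emph{not} labeled $\alpha_s$, carrying an edge with label $\beta$ to one with label $s\beta$. Directions are preserved, because the only way $x\mapsto sx$ could reverse an edge $x\to y$ is if $y\gtrdot x$, $sx>x$ and $sy<y$, and then the lifting property gives $x\le sy$, hence $x=sy$ (equal lengths) and $y=sx$, so the edge was in fact labeled $\alpha_s$. Thus $x\mapsto sx$ is a length-preserving bijection from $\prec$-increasing paths $a\to b$ avoiding $\alpha_s$ onto ${}^{s}\!\prec$-increasing paths $sa\to sb$ avoiding $\alpha_s$.

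Now fix $u\le v$ with $u\ne v$ and let $\alpha_s=\min_{\prec}\Phi^+$. Since $\alpha_s$ is $\prec$-minimal, a $\prec$-increasing path from $u$ to $v$ either avoids $\alpha_s$ or uses it exactly once, necessarily as its first edge $u\xrightarrow{\alpha_s}su$ (so $su>u$); hence $F^{\prec}_{u,v}=G_{u,v}+[su>u]\,q\,G_{su,v}$, where $G_{a,b}$ counts $\prec$-increasing paths $a\to b$ avoiding $\alpha_s$. Suppose $sv<v$. By the bijection, $G_{a,b}$ is the number of ${}^{s}\!\prec$-increasing paths $sa\to sb$ avoiding $\alpha_s$; but since $\alpha_s$ is the \emph{maximum} of ${}^{s}\!\prec$, such a path could use $\alpha_s$ only as a final edge $v\xrightarrow{\alpha_s}sv$, impossible when $sv<v$. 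So $G_{u,v}=F^{{}^{s}\!\prec}_{su,sv}$ and $G_{su,v}=F^{{}^{s}\!\prec}_{u,sv}$, and as $\ell(sv)=\ell(v)-1$ the inductive hypothesis replaces each $F^{{}^{s}\!\prec}$ by the corresponding $\tR$, producing exactly the displayed recursion; the induction closes. The symmetric argument ``from the top'', using $\alpha_r=\max_{\prec}\Phi^+$ and the dual conjugation, covers the case $rv<v$.

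What remains --- and is, I expect, the crux --- is when neither $\min_{\prec}\Phi^+$ nor $\max_{\prec}\Phi^+$ is a left descent of $v$, so that the recursion must be run at some $t\in S$ with $\alpha_t$ in the \emph{interior} of the order $\prec$. Classifying $\prec$-increasing paths $u\to v$ by the position of the label $\alpha_t$ then gives a convolution over intermediate vertices, indexed by the initial section $\{\beta\prec\alpha_t\}$ and the final section $\{\beta\succ\alpha_t\}$, rather than the clean two-term split above, and the induction does not close as written. I would handle this by first establishing that $F^{\prec}_{u,v}$ is independent of the reflection order $\prec$, using Dyer's analysis of the space of reflection orders (for finite $W$, the connectedness of reduced words of $w_0$ under braid moves reduces this to reversal-invariance of the path count inside a rank-two dihedral block, a finite check), and then invoking the clean case for a reflection order in which $t$ is $\prec$-minimal. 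A more uniform alternative is to sidestep the case analysis altogether by realizing $\sum_{\gamma}q^{\ell(\gamma)}$ inside the Hecke algebra via the $T$-basis and the bar involution and matching it with $\tR_{u,v}$ directly.
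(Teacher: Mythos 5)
This proposition is stated in the paper as a citation to Dyer's work and is not reproved there, so there is no internal argument to compare against; I can only assess the proposal on its own terms.

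Your strategy --- show that $F^{\prec}_{u,v}$ satisfies the $\tR$-recursion, using the fact that the $\prec$-extremal positive root is simple and the upper $s$-conjugate $\prec^s$ together with the Bruhat-graph automorphism $x\mapsto sx$ on $\alpha_s$-free edges --- is a reasonable skeleton, and your analysis of the cases $sv<v$ and $rv<v$ is essentially correct (modulo the small imprecision that a non-simple positive root lies in $\RR_{>0}\gamma_1+\RR_{>0}\gamma_2$ for other positive roots $\gamma_1,\gamma_2$, not necessarily as an integral sum). But the gap you flag at the end is genuine and is the crux, not a corner case. The set of simple roots attaining $\min_{\prec}$ or $\max_{\prec}$ has size two, and there is no reason that set should meet the left descent set of $v$; for example with $W=S_4$, $v=s_2$, and a reflection order induced by a reduced word for $w_0$ beginning with $s_1$ and ending with $s_3$, neither extremal simple root is a descent of $v$. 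In that situation your reduction sends $[u,v]$ to an interval with $\ell(sv)>\ell(v)$ (and indeed $\ell(su,sv)=\ell(u,v)+2$ when $su<u$ and $sv>v$), so no induction on $\ell(v)$, on $\ell(u,v)$, or on $\ell(u)+\ell(v)$ closes. Your two proposed repairs --- first proving $F^{\prec}$ is independent of $\prec$ by moving along a connected set of reflection orders and checking invariance under local dihedral reversals, or reinterpreting the path generating function inside the Hecke algebra and matching it against $T_{w^{-1}}^{-1}$ --- are each viable and roughly correspond to known proofs, but neither is carried out here, and the independence statement in particular requires its own nontrivial argument (connectedness of the set of reflection orders and the rank-two computation) rather than being a ``finite check'' one can wave at. As written the proposal establishes the result only for reflection orders $\prec$ whose minimal or maximal simple root is a left descent of $v$, which is strictly weaker than the proposition; the remaining case must be argued, not merely acknowledged.
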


An \newword{initial section} of a reflection order is a set $X \subseteq \Phi^+$ so that if $\beta\in X$ and $\alpha\in \Phi^+$ with $\alpha\prec \beta$, then $\alpha\in X$. An \newword{inversion set} is a set of roots of the form $N(w)\coloneqq \{\beta\in \Phi^+ \mid w^{-1}\beta \not\in \Phi^+\}$ for some $w\in W$. If $s \in S$ and $\ell(w)<\ell(ws)$, then $N(ws) =  N(w)\sqcup \{ w\alpha_{s} \}$. In this way, a reduced expression $w=s_{1}\cdots s_{\ell}$ is equivalent to a chain of inversion sets $\varnothing \subset N(s_{1}) \subset N(s_{1}s_{2}) \subset \cdots \subset N(w)$.

\begin{proposition}[{\cite{Dyer1993}}]
\label{prop:existreflectionorders}
    Given any chain of inversion sets $X_1 \subset X_2 \subset \cdots \subset X_k$, there exists a reflection order $\prec$ so that, for all $i$, $X_i$ is an initial section of $\prec$.
\end{proposition}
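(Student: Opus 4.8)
The plan is to reduce the statement to the case of a \emph{maximal} chain of inversion sets, recognize such a chain as the data of a reflection order, and treat the missing ``top'' of the chain separately when $W$ is infinite. Write $X_i = N(w_i)$. By the covering formula $N(ws) = N(w) \sqcup \{w\alpha_s\}$ (for $\ell(ws)>\ell(w)$), the chain $X_1 \subset \cdots \subset X_k$ is equivalent to $w_1 < w_2 < \cdots < w_k$ in right weak order, hence $w_k = v_1 v_2 \cdots v_k$ with $v_i \coloneqq w_{i-1}^{-1} w_i$ (and $w_0 \coloneqq e$) and $\ell(w_k) = \sum_i \ell(v_i)$. Concatenating reduced words of the $v_i$ produces a reduced word $\underline{w} = s_{j_1}\cdots s_{j_N}$ for $w_k$ whose associated chain of inversion sets $\varnothing = Y_0 \subset Y_1 \subset \cdots \subset Y_N = X_k$ refines the given one; equivalently $\underline{w}$ induces an enumeration $\beta_1,\dots,\beta_N$ of $X_k$ in which each $X_i$ is a prefix. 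So it suffices to build a reflection order whose restriction to $X_k$ is $\beta_1 \prec \cdots \prec \beta_N$ and for which $X_k$ is an initial section.

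When $W$ is finite this is immediate: $w_k$ lies below the longest element $w_0$ in right weak order, so lengthen $\underline{w}$ to a reduced word for $w_0$; then $\beta_1,\dots,\beta_N$ is a total order on all of $\Phi^+$. I claim it is a reflection order. Let $\alpha,\beta,\gamma\in\Phi^+$ with $\gamma\in\RR_{>0}\alpha+\RR_{>0}\beta$, and say $\alpha=\beta_a$, $\beta=\beta_b$, $\gamma=\beta_c$ with $a<b$. Each $Y_m$, and also each complement $\Phi^+\setminus Y_m$, is closed under positive $\RR$-combinations landing in $\Phi^+$ (being an inversion set, resp.\ the complement of one). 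If $c<a$ then $\gamma\in Y_c\subseteq Y_{a-1}$, contradicting closedness of $\Phi^+\setminus Y_{a-1}\ni\alpha,\beta$; if $c>b$ then $\gamma\notin Y_{c-1}\supseteq Y_b\ni\alpha,\beta$, contradicting closedness of $Y_{c-1}$. Hence $a<c<b$, so $\gamma$ lies strictly between $\alpha$ and $\beta$ in $\prec$, and every $Y_m$, hence every $X_i$, is an initial section.

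For arbitrary $W$ the only missing ingredient is that the reduced-word enumeration of the single inversion set $N(w_k)$ extends to a reflection order on all of $\Phi^+$; granting this, the closedness computation above (now phrased for a maximal chain of biclosed sets refining $Y_0\subset\cdots\subset Y_N$) again shows each $X_i$ is an initial section. Since an infinite $W$ has no longest element to pad $\underline{w}$ with, one instead exploits that $w_k$ carries $\Phi^+\setminus N(w_k)$ bijectively onto itself and $N(w_k)$ onto $-N(w_k^{-1})$, and ``twists'' an arbitrary reflection order $\prec_0$ on $\Phi^+$ by $w_k$ so that $N(w_k)$ becomes an initial section carrying the prescribed internal order; equivalently, one invokes the structure theory of biclosed subsets of $\Phi^+$ to extend the finite chain to a maximal chain of biclosed sets, these being exactly the families of initial sections of reflection orders.

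I expect the real difficulty to be in this last step: verifying the reflection-order triple condition \emph{across the seam} at $N(w_k)$, where a triple $\gamma\in\RR_{>0}\alpha+\RR_{>0}\beta$ may have $\alpha,\gamma\in N(w_k)$ but $\beta\notin N(w_k)$, so that the position of $\gamma$ relative to $\alpha$ is pinned down not by the internal structure of $N(w_k)$ but only by the chosen reduced word. This is precisely the point at which an arbitrary ``internal'' reflection order on $N(w_k)$ would fail, and where the bulk of Dyer's argument in \cite{Dyer1993} is concentrated; the earlier parts of the plan (the weak-order factorization and the closedness/coclosedness computation) are routine by comparison.
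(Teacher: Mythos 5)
The paper itself does not prove this proposition; it simply cites Dyer, so there is no ``paper's own proof'' to compare against, and I'll evaluate your argument on its merits.

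Your finite-group argument is correct and essentially the standard one. The translation of a chain of inversion sets into a right-weak-order chain, the refinement via concatenated reduced words, the extension to a reduced word for the longest element, and the verification of the reflection-order axiom via closedness of $Y_m$ and coclosedness of $\Phi^+\setminus Y_m$ are all sound. (One notational slip: you set $w_0\coloneqq e$ in the factorization and two sentences later use $w_0$ for the longest element; harmless, but worth fixing.)

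The infinite case, however, is left with a genuine gap, and you are candid about it. Of your two proposed routes, the appeal to ``the structure theory of biclosed subsets of $\Phi^+$... these being exactly the families of initial sections of reflection orders'' cannot be used as a black box: in a general Coxeter group, the statement that every biclosed subset of $\Phi^+$ is an initial section of a reflection order is \emph{Dyer's conjecture}, not a theorem, and invoking it here would be circular relative to the literature you are citing. Your first route, twisting a fixed reflection order $\prec_0$ by $w_k$ and prepending the reduced-word enumeration of $N(w_k)$, is much closer to what actually works, but the construction as stated does not obviously satisfy the reflection-order condition across the seam: for a convex triple $\gamma\in\RR_{>0}\alpha+\RR_{>0}\beta$ with $\alpha\in N(w_k)$ and $\beta,\gamma\notin N(w_k)$, one needs $\gamma\prec\beta$, i.e.\ $w_k^{-1}\gamma\prec_0 w_k^{-1}\beta$, but the triple condition for $\prec_0$ applied to $(\,-w_k^{-1}\alpha,\,w_k^{-1}\beta,\,w_k^{-1}\gamma\,)$ only tells you that $w_k^{-1}\beta$ lies \emph{between} the other two, which can fail to pin down the orientation you need. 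Making the gluing work requires a more careful inductive construction (e.g.\ via iterated $s$-conjugation of reflection orders along the reduced word, using the lifting property at each step), and this is precisely where the content of Dyer's proof sits. So: correct and complete for finite $W$, honest but incomplete for general $W$.
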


There is another source of reflection orders which we will also use (see, e.g., \cite[Ch.~5]{Bjorner-Brenti}).
\begin{definition}\label{def:uppersconjugate}
    Given a reflection order $\prec$ and a simple reflection $s\in S$, the \newword{upper $s$-conjugate} is the reflection order $\prec^s$ such that, for distinct positive roots $\alpha,\beta$, we have $\alpha\prec^s \beta$ if and only if one of the following hold:
    \begin{itemize}
        \item $\alpha,\beta\prec \alpha_s$ and $\alpha\prec \beta$; or
        \item $\alpha \prec \alpha_s \prec \beta$; or
        \item $\alpha_s \prec \alpha,\beta$ and $s\alpha \prec s\beta$; or
        \item $\beta = \alpha_s$.
    \end{itemize}
\end{definition}

We will make use of the upper $s$-conjugate via the following (well-known) lemma, whose proof we include for completeness.

\begin{lemma}\label{lem:sgamma}
    Fix a reflection order $\prec$, and let $s\in S$ be such that $\alpha_s$ is the $\prec$-minimal positive root. Let $\gamma = (u=x_0\xrightarrow{\beta_1} x_1 \xrightarrow{\beta_2} \cdots \xrightarrow{\beta_r} x_{r}=v)$ be an increasing path from $u$ to $v$ such that $su$ is not a vertex of $\gamma$. Then 
    \[ s\gamma \coloneqq (sx_0 \to sx_1 \to \cdots \to sx_r) \]
    is a path from $su$ to $sv$ which is increasing with respect to $\prec^s$.
\end{lemma}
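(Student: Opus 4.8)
The plan is to show that left multiplication by $s$ carries each edge of $\gamma$ to an edge of $\Gamma$ with the same orientation, and then to read off how the edge labels transform under the passage to $\prec^s$.

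First I would observe that none of the labels $\beta_i$ equals $\alpha_s$. Since $\gamma$ is $\prec$-increasing and $\alpha_s$ is $\prec$-minimal, the only possibility would be $\beta_1 = \alpha_s$; but then $x_1 = t_{\alpha_s} x_0 = su$ would be a vertex of $\gamma$, contradicting the hypothesis. As $s$ permutes $\Phi^+ \setminus \{\alpha_s\}$, it follows that every $s\beta_i$ is again a positive root. Next, from $t_{\beta_i} x_{i-1} = x_i$ I get $t_{s\beta_i}(s x_{i-1}) = s x_i$, so $s x_{i-1}$ and $s x_i$ are joined in $\widehat{\Gamma}$ by an edge with label $s\beta_i$. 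To orient this edge I would use the standard criterion that, for $w \in W$ and $\delta \in \Phi^+$, one has $\ell(t_\delta w) > \ell(w)$ if and only if $w^{-1}\delta \in \Phi^+$; applying it and computing $(s x_{i-1})^{-1}(s\beta_i) = x_{i-1}^{-1}\beta_i$ shows that $s x_{i-1} < s x_i$ holds precisely when $x_{i-1} < x_i$, which is the case since $\gamma$ is a directed path. Hence $s\gamma$ is a directed path from $su$ to $sv$ whose successive edge labels are $s\beta_1, \dots, s\beta_r$.

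It then remains to check $s\beta_1 \prec^s \cdots \prec^s s\beta_r$. Because $\alpha_s$ is $\prec$-minimal and neither $\beta_i$ nor $s\beta_i$ equals $\alpha_s$, inspecting \Cref{def:uppersconjugate} shows that only its third clause can govern the comparison of $s\beta_i$ and $s\beta_{i+1}$, giving $s\beta_i \prec^s s\beta_{i+1}$ if and only if $s(s\beta_i) = \beta_i \prec \beta_{i+1} = s(s\beta_{i+1})$, which holds since $\gamma$ is increasing. The argument is short, and the only points demanding care are tracking the conventions in the length criterion under left (as opposed to right) multiplication, so as to get the orientation of $s\gamma$ right, and confirming that under the hypothesis on $\alpha_s$ the third clause of \Cref{def:uppersconjugate} is the governing one throughout; I do not anticipate a genuine obstacle beyond this bookkeeping.
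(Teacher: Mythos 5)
Your proof is correct and follows essentially the same outline as the paper's: observe that no $\beta_i$ equals $\alpha_s$ (else $su$ would be a vertex), deduce that each $sx_{i-1}\to sx_i$ is again an upward edge of the Bruhat graph labeled $s\beta_i\in\Phi^+$, and then check from \Cref{def:uppersconjugate} that $s\beta_i\prec^s s\beta_{i+1}$ via the third clause. The only divergence is in the middle step: the paper invokes the lifting property for Bruhat order to get $sx_{i-1}<sx_i$, whereas you verify it directly from the length criterion $\ell(t_\delta w)>\ell(w)\iff w^{-1}\delta\in\Phi^+$ via the identity $(sx_{i-1})^{-1}(s\beta_i)=x_{i-1}^{-1}\beta_i$. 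Your route is arguably a bit cleaner here, since it uses exactly the data of a Bruhat-graph edge, avoids any case split on whether $s$ is a descent of $x_{i-1}$ or $x_i$, and produces the transformed edge label $s\beta_i$ as part of the same computation.
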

\begin{proof}
    Because $\alpha_s$ is the $\prec$-minimal root and $\gamma$ is an increasing path, either $x_1 = su$ or else $sx_i \neq x_{i+1}$ for all $i$. By hypothesis, we are in the latter case. Then the lifting property for Bruhat order (see, e.g., \cite[Proposition 2.2.7]{Bjorner-Brenti}) implies that $sx_i < sx_{i+1}$ for all $i$, so $s\gamma$ is indeed a path from $su$ to $sv$. Moreover, the edge label of $sx_i \to sx_{i+1}$ is $s\beta_{i+1}$. Since $\alpha_s\prec \beta_i \prec \beta_{i+1}$, we have $s\beta_i \prec^s s\beta_{i+1}$. Hence $s\gamma$ is increasing with respect to $\prec^s$.
\end{proof}

\subsection{The $d$-invariant}

For elements $u,v\in W$, define $d_{u,v} = -[q^{\ell(u,v)-1}]R_{u,v}$, so that $R_{u,v} = q^{\ell(u,v)} - d_{u,v} q^{\ell(u,v)-1} + \cdots.$ The recurrence for $R$-polynomials implies a recurrence for the $d$-invariant. 

\begin{lemma}\label{lem:drecurrence}
    If $s\in S$ and $u,v\in W $ are such that $sv<v$, then 
    \[ d_{u,v} = \begin{cases}
        d_{su,sv} &\text{if $su<u$,}\\
        d_{u,sv}+1 &\text{if $su>u$ and $su\not \leq sv$,} \\
        d_{u,sv} &\text{if $su>u$ and $su\leq sv$.}
    \end{cases} \]
\end{lemma}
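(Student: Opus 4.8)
The plan is to extract the recurrence directly from the Kazhdan--Lusztig recurrence for $R_{u,v}$, by comparing the top two coefficients of the polynomials appearing on each side. We may assume $u < v$ (when $u=v$ one necessarily has $su<u$, and both sides equal $0$). Write $n = \ell(u,v)$, so that $R_{u,v}$ is monic of degree $n$ and $[q^{n-1}]R_{u,v} = -d_{u,v}$. Before splitting into cases I would record the length identities that follow from $\ell(sv) = \ell(v)-1$: if $su<u$ then $\ell(su,sv) = n$; if $su>u$ then $\ell(u,sv) = n-1$; and if moreover $su \le sv$ then $\ell(su,sv) = n-2$. I would also note the instances of the lifting property (\cite[Proposition~2.2.7]{Bjorner-Brenti}) that are needed: when $su<u$ and $sv<v$ one has $su \le sv$, and when $su>u$ and $sv<v$ one has $u \le sv$; these ensure that every $R$-polynomial occurring below is genuinely monic of the stated degree rather than zero.

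\emph{Case $su<u$.} The recurrence gives $R_{u,v} = R_{su,sv}$. Since $\ell(su,sv) = n$, comparing coefficients of $q^{n-1}$ yields $-d_{u,v} = -d_{su,sv}$, i.e.\ $d_{u,v} = d_{su,sv}$.

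\emph{Case $su>u$.} The recurrence gives $R_{u,v} = (q-1)R_{u,sv} + qR_{su,sv}$, where $R_{u,sv}$ is monic of degree $n-1$, say $R_{u,sv} = q^{n-1} - d_{u,sv}q^{n-2} + \cdots$. If $su \not\le sv$, then $R_{su,sv} = 0$ and $R_{u,v} = (q-1)R_{u,sv}$, so $[q^{n-1}]R_{u,v} = -d_{u,sv}-1$ and hence $d_{u,v} = d_{u,sv}+1$. If instead $su \le sv$, then $R_{su,sv}$ is monic of degree $n-2$, so $qR_{su,sv}$ contributes $+1$ to $[q^{n-1}]R_{u,v}$; added to the contribution $-d_{u,sv}-1$ coming from $(q-1)R_{u,sv}$, this gives $[q^{n-1}]R_{u,v} = -d_{u,sv}$, hence $d_{u,v} = d_{u,sv}$.

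There is no substantial obstacle here: the argument is a routine extraction of leading coefficients from the recurrence. The only points demanding (standard) care are the degree bookkeeping via the length identities above and the appeals to the lifting property, which certify that the $R$-polynomials on the right-hand side of the recurrence have the expected leading behavior.
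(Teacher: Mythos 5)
Your proof is correct and is exactly the argument the paper has in mind; the paper states the lemma without proof as an immediate consequence of the $R$-polynomial recurrence, and your careful extraction of the $q^{\ell(u,v)-1}$ coefficient, together with the length bookkeeping and the lifting-property checks ensuring monicity, fills that gap in the intended way.
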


The definitions of the $P$-, $R$-, and $\tR$-polynomials imply that $d_{u,v}$ is related to various of their coefficients.
\begin{lemma}[See e.g. \cite{dyer-q-coefficient}]\label{lem:dincarnations}
    For any elements $u\leq v$ in $W$ we have:
    \begin{itemize}
        \item[(a)] $[q]P_{u,v} = |\{c\in [u,v] \mid \ell(c,v) =1 \}| - d_{u,v}$.
        \item[(b)] $[q^{\ell(u,v)-1}]R_{u,v} = -d_{u,v}$.
        \item[(c)] $[q]R_{u,v} = (-1)^{\ell(u,v)-1} d_{u,v}$.
        \item[(d)] $[q^{\ell(u,v)-2}]\tR_{u,v} = \ell(u,v) - d_{u,v}$.
    \end{itemize}
\end{lemma}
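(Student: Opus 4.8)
The plan is to get part (b) for free, derive (d) and (c) from a direct expansion of the $\tR$-substitution, and obtain (a) by extracting the coefficient of $q^{\ell(u,v)-1}$ from the Kazhdan--Lusztig recursion for $P_{u,v}$.

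Part (b) is simply the definition of $d_{u,v}$. For (d), write $\ell \coloneqq \ell(u,v)$ and use $R_{u,v}(q) = q^{\ell/2}\tR_{u,v}(q^{1/2}-q^{-1/2})$. I would first record the standard parity observation: every monomial of $\tR_{u,v}$ has degree congruent to $\ell$ modulo $2$, since $q^{\ell/2}(q^{1/2}-q^{-1/2})^j = q^{(\ell-j)/2}(q-1)^j$ and a monomial $q^j$ of $\tR_{u,v}$ with $j \not\equiv \ell \pmod 2$ would force genuine half-integer powers of $q$ into $R_{u,v} \in \mathbb{Z}[q]$. Writing $\tR_{u,v} = q^\ell + a_{\ell-2}q^{\ell-2} + a_{\ell-4}q^{\ell-4} + \cdots$ and expanding via the identity above, only $j = \ell$ and $j = \ell-2$ contribute to the coefficient of $q^{\ell-1}$, giving $[q^{\ell-1}]R_{u,v} = -\ell + a_{\ell-2}$; comparing with (b) yields $a_{\ell-2} = \ell - d_{u,v}$, which is (d). For (c), the same substitution together with the parity observation gives the classical palindromy $R_{u,v}(q) = (-1)^\ell q^\ell R_{u,v}(q^{-1})$, so $[q]R_{u,v} = (-1)^\ell [q^{\ell-1}]R_{u,v} = (-1)^{\ell-1}d_{u,v}$ by (b). (Alternatively, (c) can be read off directly from the same expansion.)

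For (a), the key idea is to extract the coefficient of $q^{\ell-1}$ — rather than of $q^1$, which would lead to a messier recursion — from $q^\ell P_{u,v}(q^{-1}) - P_{u,v}(q) = \sum_{x \in (u,v]} R_{u,x}P_{x,v}$. The cases $\ell \le 1$ are checked by hand (there $P_{u,v}=1$ and $|\{c \in [u,v] \mid \ell(c,v)=1\}| = d_{u,v}$), so assume $\ell \ge 2$. Then $\deg P_{u,v} < \ell/2 \le \ell-1$, so on the left $P_{u,v}(q)$ does not contribute to $[q^{\ell-1}]$ while $[q^{\ell-1}]\big(q^\ell P_{u,v}(q^{-1})\big) = [q]P_{u,v}$. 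On the right: for $x$ with $\ell(x,v) \ge 2$ one has $\deg(R_{u,x}P_{x,v}) < \ell(u,x) + \ell(x,v)/2 = \ell - \ell(x,v)/2 \le \ell-1$, hence no contribution; the term $x = v$ contributes $[q^{\ell-1}]R_{u,v} = -d_{u,v}$; and each $x$ with $\ell(x,v) = 1$ has $P_{x,v}=1$ and $R_{u,x}$ monic of degree $\ell(u,x) = \ell-1$, contributing $+1$. Since $\ell \ge 2$, the set of such $x$ is exactly $\{c \in [u,v] \mid \ell(c,v) = 1\}$. Equating the two sides gives $[q]P_{u,v} = |\{c \in [u,v] \mid \ell(c,v)=1\}| - d_{u,v}$.

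None of this is genuinely hard. The points that require care are the parity observation for $\tR_{u,v}$ and, in (a), the degree bookkeeping that isolates precisely which summands $R_{u,x}P_{x,v}$ reach $q^{\ell-1}$, together with the boundary cases $\ell(u,v) \le 1$. I expect the only mild obstacle to be making the estimate $\deg(R_{u,x}P_{x,v}) \le \ell-2$ for $\ell(x,v) \ge 2$ airtight and handling the tight case $\ell(x,v) = 1$ (where one uses that $\deg P_{x,v} < 1/2$ forces $P_{x,v}=1$) correctly.
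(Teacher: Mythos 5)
Your proof is correct and is the standard elementary derivation; the paper itself does not prove this lemma but cites \cite{dyer-q-coefficient}, so there is no in-paper argument to compare against. Part (b) is definitional, the parity argument for $\tR_{u,v}$ and the resulting palindromy $R_{u,v}(q) = (-1)^{\ell} q^{\ell} R_{u,v}(q^{-1})$ are handled correctly, and the degree bookkeeping in (a) — isolating the summands $x=v$ and $\ell(x,v)=1$, noting that $\ell(x,v)\geq 2$ forces $\deg(R_{u,x}P_{x,v}) < \ell - \ell(x,v)/2 \leq \ell-1$, and separately checking $\ell(u,v)\leq 1$ — is sound.
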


Our main result will be that $d_{u,v}$ is equal to a quantity $g_{u,v}$, introduced by Patimo in \cite{Patimo-q-coefficient}, which is manifestly an invariant of the poset $[u,v]$. 

\begin{definition}[Def.~4.5 of \cite{Patimo-q-coefficient}]
We say that $F \subseteq E_{u,v}$ is \newword{diamond-closed} if, whenever it contains two consecutive edges in a $4$-cycle of $\widehat{\Gamma}_{u,v}$, it also contains the other two. We denote by $F^{\diamond}$ the \newword{diamond-closure} of $F$: the smallest diamond-closed subset of $E_{u,v}$ such that $F \subseteq F^{\diamond}$. We call $F$ \newword{diamond-generating} if $F^{\diamond}=E_{u,v}$. We write $g_{u,v}$ for the minimum size of a diamond-generating subset of $E_{u,v}$.
\end{definition}

The quantity $g_{u,v}$ is an invariant of the poset $[u,v]$ since the poset structure determines $\Gamma_{u,v}$ as an unlabeled directed graph \cite{Dyer-bruhat-graph}.

\begin{figure}
    \centering
    	\begin{tikzpicture}[scale=1]
  
			\node (a) at (0,0) {$u$}; 
			\node (b1) at (-1.5,1) {$\bullet$}; 
			\node (b2) at (-0.5,1) {$\bullet$}; 
			\node (b3) at (0.5,1) {$\bullet$}; 
			\node (b4) at (1.5,1) {$\bullet$}; 
			\node (c1) at (-1.5,2) {$\bullet$}; 
			\node (c2) at (-0.5,2) {$\bullet$}; 
			\node (c3) at (0.5,2) {$\bullet$}; 
			\node (c4) at (1.5,2) {$\bullet$}; 
			\node (d) at (0,3) {$v$}; 
			\draw[->] (a)--(b4);
			\draw[->] (c1)--(d);
			\draw[->] (c2)--(d);
			\draw[->] (c3)--(d);
			\draw[->] (c4)--(d);
			\draw[->] (b1)--(c1);
			\draw[->] (b1)--(c2);
			\draw[->] (b2)--(c1);
			\draw[->] (b2)--(c3);
			\draw[->] (b3)--(c2);
			\draw[->] (b3)--(c4);
			\draw[->] (b4)--(c3);
			\draw[->] (b4)--(c4);
            \draw[->,red] (a)--(b1);
			\draw[->,red] (a)--(b2);
			\draw[->,red] (a)--(b3);
    \end{tikzpicture}
    \caption{A Bruhat graph $\Gamma_{u,v}$ and a minimum-size diamond-generating set (shown in red).}
    \label{fig:placeholder}
\end{figure}
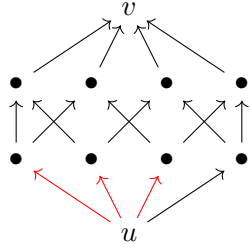

\begin{remark}
    We have defined $g_{u,v}$ as the minimum size of a diamond-generating set. One could instead consider variants of this definition. Here are two similar invariants one could define.
    
    For each pair $x,y\in [u,v]$ such that $x\leq y$ and $\ell(x,y)=2$, the graph $\widehat\Gamma_{x,y}$ is a $4$-cycle in $\widehat\Gamma_{u,v}$. Define a \emph{weakly diamond-closed} set $F\subseteq E_{u,v}$ to be one such that, whenever $F$ contains two consecutive edges in $\widehat\Gamma_{x,y}$ for some $x,y\in [u,v]$ with $\ell(x,y)=2$, then $F$ also contains the other two edges of $\widehat\Gamma_{x,y}$.
    \begin{itemize}
        \item Let $g_{u,v}'$ be the minimum size of a diamond-generating set $F$ such that every edge in $F$ has length 1.
        \item Let $g_{u,v}''$ be the minimum size of a weakly diamond-generating set $F$.
    \end{itemize}
    \emph{A priori}, we have $g_{u,v} \leq g_{u,v}' \leq g_{u,v}''$. It follows from our proof of \Cref{thm:d-equals-g} that these alternative definitions in fact define the same invariant: that is, $g_{u,v}=g_{u,v}'=g_{u,v}''=d_{u,v}$.
\end{remark}

The following lemma is immediate from the shellability \cite{bjorner-wachs} of order complexes of Bruhat intervals.

\begin{lemma}\label{lem:chainsdiamondgenerate}
    If $F$ contains the set of edges of a maximal chain of $[u,v]$, then $F$ diamond-generates $[u,v]$.
\end{lemma}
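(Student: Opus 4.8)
The plan is to prove the sharper statement that the diamond-closure $\mathbf{c}^{\diamond}$ of the edge set $\mathbf{c}$ of a single maximal chain of $[u,v]$ is already all of $E_{u,v}$; the lemma follows at once, since $F \supseteq \mathbf{c}$ forces $F^{\diamond} \supseteq \mathbf{c}^{\diamond}$. I would deal with the cover edges (length $1$) and the longer edges of $\widehat{\Gamma}_{u,v}$ separately, and may assume $\ell(u,v) \geq 2$, the smaller cases being trivial.

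For the cover edges I would use the following consequence of the shellability \cite{bjorner-wachs} of the order complex of $[u,v]$, valid for the facets of any pure shellable complex of positive dimension: the maximal chains of $[u,v]$ are connected under the relation of differing by an \emph{elementary move}, i.e.\ replacing a single element $w$ in a segment $z \lessdot w \lessdot z'$ by the unique other element $w'$ with $z \lessdot w' \lessdot z'$. (There are exactly two such elements, since Bruhat order is thin, so that $\widehat{\Gamma}_{z,z'}$ is a $4$-cycle $z-w-z'-w'-z$.) All four edges of this $4$-cycle lie in $\widehat{\Gamma}_{u,v}$, so an elementary move does not change the diamond-closure of the edge set of a chain. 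Hence $\mathbf{c}^{\diamond}$ contains the edges of every maximal chain of $[u,v]$, and these exhaust the cover edges.

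For a general edge $a \to b$ of $\widehat{\Gamma}_{u,v}$, say with $b = t_{\beta} a$, I would induct on $\ell(a,b)$; the base case $\ell(a,b)=1$ is a cover edge, settled above. Let $\ell(a,b)=n\geq 3$, and suppose $b$ has a left descent $s$ with $sa>a$. Then the lifting property gives $a \lessdot sa$ and $sb \lessdot b$ with $a,sa,sb,b$ four distinct elements of $[a,b]\subseteq[u,v]$, while $sa \to sb = t_{s\beta}(sa)$ is an edge of length $n-2$, lying in $\mathbf{c}^{\diamond}$ by the inductive hypothesis. In the $4$-cycle $a-sa-sb-b-a$ of $\widehat{\Gamma}_{u,v}$ the consecutive edges $a-sa$ (a cover edge) and $sa-sb$ both lie in $\mathbf{c}^{\diamond}$, so $a-b$ is forced into $\mathbf{c}^{\diamond}$ as well. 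Symmetrically, one is done if $b$ has a right descent $s$ with $as>a$, via the $4$-cycle $a-as-bs-b-a$.

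The one remaining situation — and the main obstacle — is a long edge $a \to b$ (with $\ell(a,b)\geq 3$) for which $D_L(b)\subseteq D_L(a)$ and $D_R(b)\subseteq D_R(a)$, where $D_L,D_R$ denote left and right descent sets; I would need to show such an edge cannot occur. This is precisely where the hypothesis that $b$ is a reflection-translate of $a$, and not merely that $a<b$, is essential: there are pairs $a<b$ for which both descent-set containments hold. I expect to eliminate this case by a direct argument with the reflection label $\beta$ and the lifting property (for instance, descending along a common left descent $s$, which preserves the length of the edge but strictly decreases $\ell(b)$, and tracking how the two containments must break), rather than by anything formal from shellability; this is the only part of the argument I anticipate being delicate.
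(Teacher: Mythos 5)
The proposal is incomplete, and you say so yourself at the end: the case of a long edge $a \to b$ with $D_L(b) \subseteq D_L(a)$ and $D_R(b) \subseteq D_R(a)$ is flagged as "the one remaining situation" and "the main obstacle," with only a sketch of how you might try to rule it out. Since that case is neither resolved nor shown to be vacuous, the argument does not prove the lemma as stated.

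To be clear about what does work. Your first step (every cover edge lies in the diamond-closure of one maximal chain, via shellability and connectedness of facets of a thin shellable complex under elementary moves) is correct, and your second step is also correct when it applies: if $b = t_\beta a$ has length $n\geq 3$ and there is a simple reflection $s$ with $sb<b$, $sa>a$ (or the right-handed analogue), then $a$, $sa$, $sb$, $b$ are four distinct elements of $[a,b]\subseteq[u,v]$, the pairs $\{a,sa\}$, $\{sa,sb\}$, $\{sb,b\}$, $\{a,b\}$ are all edges of $\widehat\Gamma_{u,v}$ (and the two "chords" $\{a,sb\}$, $\{sa,b\}$ are not, since those length differences are even), and the consecutive shorter edges $\{a,sa\}$ and $\{sa,sb\}$ in the resulting $4$-cycle force $\{a,b\}$ into the closure. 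But when no such $s$ exists on either side, your induction has nothing to hand off to, and you do not supply the argument. Note also that even if the descent-set containments can in fact occur for some Bruhat-graph edge, the lemma could still be true via a different $4$-cycle in $\widehat\Gamma_{u,v}$ — for instance one using an atom $c=ra$ and the element $trtc$, or one passing through elements of $[u,v]$ outside $[a,b]$ — so the dichotomy you set up (either a suitable descent exists, or the lemma fails) is itself not justified.

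For comparison, the paper does not attempt this explicit case analysis at all: it treats the lemma as an immediate consequence of the shellability of Bruhat order complexes, citing Björner–Wachs, and moves on. Your approach is more constructive and elementary in spirit (explicit $4$-cycles in $\widehat\Gamma_{u,v}$, descent bookkeeping, induction on edge length), which would be valuable if completed, but as written it is a genuinely different route that currently has a hole exactly where the paper relies on a black box. If you want to push your argument through, the thing to nail down is a structural fact of Dyer's Bruhat-graph theory guaranteeing, for every edge $a\to b$ with $\ell(a,b)\geq 3$, a $4$-cycle $a - x - y - b - a$ inside $[u,v]$ with $\{a,x\}$ and $\{x,y\}$ both of strictly smaller edge-length; the descent-based $4$-cycle is one candidate, but you would either need to prove that such a descent always exists, or identify an alternative $4$-cycle in the exceptional case.
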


Patimo \cite{Patimo-q-coefficient} showed the following, which also follows from Dyer's work \cite{dyer-q-coefficient}. \Cref{thm:d-equals-g} establishes equality.

\begin{lemma}[Prop.~4.7 of \cite{Patimo-q-coefficient}]\label{lem:dginequality}
    For any $u,v\in W$, we have $d_{u,v}\leq g_{u,v}$.
\end{lemma}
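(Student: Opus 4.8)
The plan is to prove $d_{u,v}\le g_{u,v}$ by induction on $\ell(v)$, using the recurrence for the $d$-invariant recorded in \Cref{lem:drecurrence}. The base case $v=u$ is trivial. For the inductive step, fix a simple reflection $s$ with $sv<v$; by \Cref{lem:drecurrence} and the inductive hypothesis applied with $v$ replaced by $sv$ (valid since $\ell(sv)=\ell(v)-1$), it suffices to establish
\[
\begin{aligned}
g_{u,v}&\ge g_{su,sv} && \text{if }su<u,\\
g_{u,v}&\ge g_{u,sv} && \text{if }su>u\text{ and }su\le sv,\\
g_{u,v}&\ge g_{u,sv}+1 && \text{if }su>u\text{ and }su\not\le sv.
\end{aligned}
\]
Granting these, the three cases of \Cref{lem:drecurrence} yield $d_{u,v}\le g_{u,v}$.

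When $su<u$, the reflection $s$ is a left descent of both $u$ and $v$, and in this situation $[u,v]$ and $[su,sv]$ are isomorphic as posets (classical; see, e.g., \cite{Bjorner-Brenti}). Since $g_{-,-}$ depends only on the isomorphism type of the interval, $g_{u,v}=g_{su,sv}$, which is even stronger than needed.

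When $su>u$, so $s$ is not a left descent of $u$ although $sv<v$, the lifting property (\cite[Proposition 2.2.7]{Bjorner-Brenti}) gives a clean picture: every $x\in[u,v]$ with $sx>x$ already lies in $[u,sv]$, and left multiplication by $s$ is an involution interchanging $A\coloneqq\{x\in[u,v]:sx<x\}$ with $[u,v]\setminus A$, so that $[u,v]=A\sqcup sA$. Using this, define $r\colon[u,v]\to[u,sv]$ by fixing each $x\le sv$ and sending every other $x$ to $sx$ (which lies in $[u,sv]$, again by lifting). Given a minimum-size diamond-generating set $F\subseteq E_{u,v}$, I would push the edges of $F$ forward along $r$, discard the degenerate images, and argue that the resulting set $\bar F\subseteq E_{u,sv}$ diamond-generates $[u,sv]$; this yields $g_{u,sv}\le|\bar F|\le|F|=g_{u,v}$. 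When moreover $su\not\le sv$, the element $su$ lies in $A\setminus[u,sv]$, and a counting argument should isolate an edge of $F$ that is spent only in reaching $su$ under diamond-closure and hence does not survive to $\bar F$, improving the bound to $|\bar F|\le|F|-1$ and producing the extra $+1$.

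The main obstacle is verifying that $\bar F$ really diamond-generates $[u,sv]$: the map $r$ is neither injective nor monotone, and a $4$-cycle of $\Gamma_{u,v}$ that meets $sA$ may map to a $4$-cycle of $\Gamma_{u,sv}$ or may collapse, so it is not automatic that diamond relations transport correctly. I expect the remedy is to choose a reflection order $\prec$ with $N(v)$ (or a closely related inversion set) as an initial section, via \Cref{prop:existreflectionorders}, and then to use \Cref{lem:sgamma}, the increasing-path description of $\tR_{u,v}$ in \Cref{prop:Rpolyincreasingpaths}, and the incarnations of $d$ in \Cref{lem:dincarnations} to match up increasing paths on the two sides, control which $4$-cycles survive, and pin down the single wasted edge when $su\not\le sv$. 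Carrying out this bookkeeping is the delicate heart of the argument.
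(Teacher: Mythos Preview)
The paper does not supply its own proof of this lemma; it is quoted as Proposition~4.7 of \cite{Patimo-q-coefficient}, with the remark that it also follows from Dyer's work \cite{dyer-q-coefficient}. So there is no in-paper argument to compare against, and I can only assess your proposal on its own terms.

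Your inductive scheme correctly matches the recursion for $d_{u,v}$ in \Cref{lem:drecurrence}, and the reduction to the three displayed inequalities for $g$ is sound. But none of the three inequalities is actually established.

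In Case~1 you assert that $[u,v]\cong[su,sv]$ as posets whenever $s$ is a common left descent of $u$ and $v$, citing \cite{Bjorner-Brenti}. This is not a result stated in that book, and it is not ``classical''. What \emph{is} standard is the polynomial identity $R_{u,v}=R_{su,sv}$ (hence $d_{u,v}=d_{su,sv}$), which is far weaker than poset isomorphism. Note that left multiplication by $s$ does not even carry $[u,v]$ into $[su,sv]$: for $W=S_3$, $u=s_1$, $v=s_1s_2s_1$, the element $x=s_2s_1\in[u,v]$ has $sx=s_1s_2s_1\notin[e,s_2s_1]=[su,sv]$. So at minimum a different map is required, and you have not provided one.

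Cases~2 and~3 are, by your own admission, incomplete. The retraction $r$ is neither injective nor order-preserving, and you correctly identify that diamond relations need not transport along it. Your proposed remedy---choosing a Deodhar reflection order and invoking \Cref{lem:sgamma}---controls \emph{increasing paths}, not diamond-closures of an arbitrary edge set $F$, whose edges need not lie on any single increasing path and need not even have length~$1$. It is therefore unclear how those tools would show that $\bar F$ diamond-generates $[u,sv]$. The ``counting argument'' meant to isolate a wasted edge and produce the extra $+1$ when $su\not\le sv$ is not even sketched.

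In short, the three inequalities you reduce to are precisely the content of the lemma, and none of them is proved here; the proposal is an outline rather than a proof.
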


\subsection{Richardson varieties}\label{sec:defRichardson}

Let $G$ be a (minimal) Kac--Moody group with Borel subgroup $B$ and Weyl group $W$. For $u \leq v$ in $W$, the \newword{(open) Richardson variety} is 
\[
\cR_{u,v} \coloneqq (BvB/B) \cap (B^{-}uB/B),
\]
where $B^-$ denotes the opposite Borel. This is an irreducible $\ell(u,v)$-dimensional subvariety of the ind-variety $G/B$. It is well known that $R_{u,v}(q)$ gives the $\mathbb{F}_q$-point count of $\cR_{u,v}$.

\section{Divergences of increasing paths}\label{sec:proof}

\begin{proof}[Proof of \Cref{thm:d-equals-g}]
By \Cref{lem:dginequality}, there is an inequality $d_{u,v}\leq g_{u,v}$. Hence to show that $d_{u,v} = g_{u,v}$, it is enough to exhibit a diamond-generating set for $[u,v]$ of size $d_{u,v}$. This is accomplished by \Cref{thm:gchain}, the statement and proof of which will occupy the remainder of the section.
\end{proof}

Let $\prec$ be a reflection order. By \Cref{prop:Rpolyincreasingpaths} and the monicity of $\tR_{u,v}$, when $u\leq v$ there is a unique increasing path $\gamma$ of length $\ell(u,v)$ from $u$ to $v$. We call this the \newword{longest (increasing) path} from $u$ to $v$, denoted $\gamma_0$. An increasing path of length $\ell(u,v)-2$ is called a \newword{second-length (increasing) path} from $u$ to $v$.
We say that a second-length path $\gamma$ \newword{diverges (from $\gamma_0$) at $x$} if each vertex of $\gamma_0$ in $[u,x]$ is a vertex of $\gamma$ and, if $x'$ denotes the vertex of $\gamma_0$ covering $x$, then $x'$ is not on $\gamma$.

\begin{lemma}\label{lem:countdiverging}
    Let $\prec$ be a reflection order and let $u<v$ be elements of $W$. Let $u\to x_1$ be the first edge on the longest path $\gamma_0$ from $u$ to $v$. Then the number of second-length paths diverging from $\gamma_0$ at $u$ is $d_{x_1,v}-d_{u,v}+1$. In particular, $d_{u,v}-d_{x_1,v}\leq 1$.
\end{lemma}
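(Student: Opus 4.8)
I want to count second-length increasing paths $\gamma$ from $u$ to $v$ that diverge from the longest path $\gamma_0$ at $u$ — meaning $\gamma$ does not pass through $x_1$, the second vertex of $\gamma_0$. My plan is to relate such paths to increasing paths from $u$ to $v$ that avoid a neighboring vertex, and then move the reflection order around so that the edge $u\to x_1$ becomes the minimal edge, at which point \Cref{lem:sgamma} converts the problem into a statement about paths from $x_1$ to $v$ in a conjugated order.

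**Main steps.**

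\begin{itemize}
\item[(1)] \emph{Reduce to the minimal-root case.} Let $\beta_1$ be the label of the edge $u\to x_1$. I first observe that $\beta_1$ being the first label of $\gamma_0$ forces $N(u)\cup\{\beta_1\}$ (appropriately — really the inversion set witnessing $x_1$) to be an initial section; more carefully, I will choose, using \Cref{prop:existreflectionorders}, a reflection order $\prec'$ for which $\alpha_s\coloneqq\beta_1$... actually $\beta_1$ need not be simple. The right move: conjugate $u$ so that $\beta_1$ is simple, or directly use that for the purpose of counting divergences at $u$ only the set of increasing paths from $u$ matters and I may replace $\prec$ by any reflection order with the same longest path having the same first edge. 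Using \Cref{prop:existreflectionorders} on the chain $N(s_{i_1})\subset\cdots$ coming from a reduced word for $u^{-1}x_1$ composed correctly, I arrange that the label of $u\to x_1$ is the $\prec$-minimal positive root $\alpha_s$ and $su = x_1$.

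\item[(2)] \emph{Translate by $s$.} Now $s\gamma_0$ is, by \Cref{lem:sgamma} (applicable since $x_1 = su$ \emph{is} on $\gamma_0$, so I instead apply it after removing the first edge): the path $\gamma_0$ minus its first edge is an increasing path from $x_1=su$ to $v$, and it is the longest path from $x_1$ to $v$ because its length is $\ell(x_1,v)=\ell(u,v)-1$ and longest paths are unique. A second-length path $\gamma$ from $u$ to $v$ that diverges at $u$ is exactly an increasing path of length $\ell(u,v)-2$ from $u$ to $v$ whose first edge is \emph{not} $u\to x_1$; since $\alpha_s$ is minimal, its first edge cannot be labeled $\alpha_s$ unless it goes to $su=x_1$, so $su$ is not a vertex of $\gamma$. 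Then $s\gamma$ is a $\prec^s$-increasing path from $su=x_1$ to $sv$ of length $\ell(u,v)-2 = \ell(x_1, v)-1$.

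\item[(3)] \emph{Identify the count with a $d$-invariant.} I must handle the two cases $sv<v$ and $sv>v$ separately, mirroring \Cref{lem:drecurrence}. In the case $su>u$ and $su\not\leq sv$: here $sv<v$ need not hold, so I should instead set up step (1) so that $s$ is chosen with $\alpha_s=\beta_1$ and note $su=x_1\le v$. I expect the clean statement to be: the number of second-length $\prec$-paths from $u$ to $v$ diverging at $u$ equals the number of $\prec^s$-increasing paths from $x_1$ to $sv$ of length $\ell(x_1,v)-1$, and by counting increasing paths via \Cref{prop:Rpolyincreasingpaths} together with \Cref{lem:dincarnations}(d), the number of second-length paths from $x_1$ to $sv$ (of the appropriate length) is governed by $d$-invariants: this count is $d_{x_1,v}-d_{u,v}+1$ after applying \Cref{lem:drecurrence} to rewrite $d_{u,v}$ in terms of data at $sv$ and matching the bijection's defect term.
\end{itemize}

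**The main obstacle.** The delicate point is step (3): carefully matching the ``$+1$'' and correctly pairing up which increasing paths from $x_1$ to $v$ (the ones \emph{not} diverging at $u$, i.e.\ passing through $x_1$ as the first step) land, under translation by $s$, among increasing paths from $x_1$ to $sv$. Concretely, every length-$(\ell(u,v)-2)$ increasing path from $u$ to $v$ either passes through $x_1$ (and hence, dropping its first edge, is a second-length path from $x_1$ to $v$) or diverges at $u$; so the number of divergences at $u$ equals $[q^{\ell(u,v)-2}]\tR_{u,v} - [q^{\ell(x_1,v)-2}]\tR_{x_1,v}$, which by \Cref{lem:dincarnations}(d) is $(\ell(u,v)-d_{u,v}) - (\ell(x_1,v)-d_{x_1,v}) = d_{x_1,v}-d_{u,v}+1$ since $\ell(u,v)=\ell(x_1,v)+1$. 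This argument in fact sidesteps steps (1)--(2) entirely; the only thing to verify is that a second-length path through $x_1$ necessarily has $u\to x_1$ as its \emph{first} edge — which holds because $x_1$ is the unique element covered... no: because $\gamma$ increasing and passing through $x_1$ with $x_1\gtrdot u$ means the sub-path $u\rightsquigarrow x_1$ has length $\ell(u,x_1)=1$, forcing it to be the single edge $u\to x_1$. The inequality $d_{u,v}-d_{x_1,v}\le 1$ then follows since the number of divergences is nonnegative. I expect the bulk of the write-up to be this clean counting argument, with the reflection-order manipulation only needed if one wants the bijective refinement.
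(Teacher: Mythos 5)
Your final paragraph, where you observe that ``this argument in fact sidesteps steps (1)--(2) entirely,'' lands on the paper's approach: every second-length increasing path from $u$ to $v$ either diverges at $u$ or decomposes as the edge $u\to x_1$ followed by a second-length path from $x_1$ to $v$, so one counts the diverging ones as $[q^{\ell(u,v)-2}]\tR_{u,v}-[q^{\ell(x_1,v)-2}]\tR_{x_1,v}$ and finishes with \Cref{lem:dincarnations}(d). You were right to abandon the reflection-order manipulations in steps (1)--(3); they are not needed here (and are somewhat confused, as you note yourself that $\beta_1$ need not be simple, so one cannot simply arrange for it to be the $\prec$-minimal positive root).

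However, the counting step has a gap. You correctly verify the \emph{injection}: a second-length path from $u$ to $v$ passing through $x_1$ must have $u\to x_1$ as its first edge (since $\ell(u,x_1)=1$ forces the subpath from $u$ to $x_1$ to be a single edge), so dropping that edge yields a second-length path from $x_1$ to $v$. But you never verify the \emph{surjection}: that conversely, prepending $u\to x_1$ to an arbitrary second-length increasing path $\gamma'$ from $x_1$ to $v$ again gives an increasing path, i.e.\ that the label $\beta_1$ of $u\to x_1$ is $\prec$-less than the first label of $\gamma'$. This is not automatic, and without it your count only yields the one-sided inequality $\#(\text{diverging})\geq a_{u,v}-a_{x_1,v}$, which is not even enough for the ``in particular'' conclusion. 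The paper closes this gap by invoking that $\gamma_0$ is the \emph{lex-minimal} increasing path from $u$ to $v$ (see \cite[Proposition~4.3]{Dyer1993}); this is the ingredient your write-up is missing.
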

\begin{proof}
    Let $a_{x,y}$ denote the number of second-length paths in $[x,y]$ (which, by \Cref{prop:Rpolyincreasingpaths}, is equal to $[q^{\ell(x,y)-2}]\tR_{x,y}$). Let $u\to x_1$ be the first edge of $\gamma_0$. Evidently, any second-length path from $u$ to $v$ which does not diverge at $u$ must be the concatenation of the edge $u\to x_1$ with a second-length path from $x_1$ to $v$. Conversely,
    it follows from the fact that $\gamma_0$ is the \emph{lex-minimal} path from $u$ to $v$ (see, e.g., \cite[Proposition 4.3]{Dyer1993}) that any second-length path from $x_1$ to $v$ concatenates with $u\to x_1$ to give a second-length path from $u$ to $v$. Hence the number of second-length paths for $[u,v]$ which diverge at $u$ is $a_{u,v}-a_{x_1,v}$.

    Now, by \Cref{lem:dincarnations}(d), we have the identity
    \[ a_{u,v} - a_{x_1,v} = d_{x_1,v}-d_{u,v}+1, \]
    and the claim follows.
\end{proof}

\begin{remark}
    It is possible for $d_{u,v}-d_{x_1,v}$ to be negative. For example, taking $W=S_4$ and the elements $u=e$ and $v=s_1s_2s_3s_2s_1$, we have that
    \begin{align*}
        d_{e,v} &= 3 \\
        d_{s_2,v} &= 4.
    \end{align*}
    Any reflection order so that $\alpha_2$ is minimal will put $e\xrightarrow{\alpha_2} s_2$ as the first edge of $\gamma_0$. Hence $d_{u,v}-d_{x_1,v} = -1$ for such reflection orders.
    
    As we will see, it is always possible to choose the reflection order so as to avoid this situation.
\end{remark}
\begin{definition}
    Let $v\in W$. A reflection order $\prec$ is said to be \newword{Deodhar with respect to $v$} if $N(v)$ is an initial section of $\prec$.
\end{definition}

For any $v\in W$, there exist reflection orders which are Deodhar with respect to $v$, by \Cref{prop:existreflectionorders}.

\begin{lemma}\label{lem:ddecreasing}
    Let $u\leq v$ be elements of $W$ and let $\prec$ be a reflection order which is Deodhar with respect to $v$. Let $\gamma_0=(u=x_0\to x_1 \to\cdots \to x_{\ell(u,v)}=v)$ be the longest path from $u$ to $v$. Then $d_{x_i,v} \geq d_{x_{i+1},v}$ for all $i$. 
\end{lemma}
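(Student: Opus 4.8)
The plan is to induct on $\ell(u,v)$ and use the $d$-invariant recurrence in \Cref{lem:drecurrence} together with \Cref{lem:countdiverging}. The key point is that a reflection order $\prec$ which is Deodhar with respect to $v$ behaves well under the conjugation operation of \Cref{lem:sgamma}: if $\alpha_s$ is the $\prec$-minimal positive root, then since $N(v)$ is an initial section, $\alpha_s \in N(v)$, i.e. $sv < v$. So the first edge $u \to x_1$ of $\gamma_0$ is either the edge $u \xrightarrow{\alpha_s} su$ (when $su < u$ is false, hmm) — more carefully, because $\alpha_s$ is $\prec$-minimal, \emph{every} increasing path starting at $u$ either has first edge $u \to su$ (if $su > u$) or does not pass through $su$ at all. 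I would first handle the edge at the bottom, showing $d_{x_0,v} = d_{u,v} \geq d_{x_1,v}$, and then apply the inductive hypothesis to the tail of $\gamma_0$, which is the longest path from $x_1$ to $v$ with respect to the \emph{same} reflection order (still Deodhar with respect to $v$, since that property does not depend on the lower endpoint). That reduces everything to the single inequality $d_{u,v} \geq d_{x_1,v}$.

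To prove $d_{u,v} \geq d_{x_1,v}$: by \Cref{lem:countdiverging} we have $d_{x_1,v} - d_{u,v} + 1 \geq 0$ is the count of diverging second-length paths, but that only gives $d_{u,v} - d_{x_1,v} \leq 1$, the wrong direction. So I need to instead \emph{rule out} that there is any second-length path diverging at $u$; equivalently, show the number of such paths is $0$, which forces $d_{x_1,v} = d_{u,v} - 1 < d_{u,v}$... no, that gives $d_{x_1,v} < d_{u,v}$, still the wrong strictness but at least $d_{u,v} \geq d_{x_1,v}$ — wait, that's $d_{x_1,v} = d_{u,v}-1 \leq d_{u,v}$, which is what I want. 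Alternatively the count could be $\geq 1$ and I'd want it to be exactly $1$... Let me reconsider: I want $d_{x_1,v} \leq d_{u,v}$, i.e. the diverging count $= d_{x_1,v} - d_{u,v} + 1 \leq 1$, which is automatic. Hmm, so actually \Cref{lem:countdiverging} already gives $d_{u,v} \geq d_{x_1,v}$ directly! The ``in particular'' clause $d_{u,v} - d_{x_1,v} \leq 1$ is one direction; the non-negativity of the count gives $d_{x_1,v} - d_{u,v} + 1 \geq 0$, i.e. $d_{x_1,v} \geq d_{u,v} - 1$. That's the wrong bound. So I genuinely need the Deodhar hypothesis to do more work.

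Here is the real mechanism I expect: use the recurrence \Cref{lem:drecurrence} with the $\prec$-minimal simple reflection $s$ (so $sv < v$). Since $\alpha_s$ is $\prec$-minimal and $\gamma_0$ is the longest increasing path, there are two cases. If $su > u$, then $x_1 = su$ and the edge $u \to x_1$ is labeled $\alpha_s$; then $d_{u,v} = d_{u,sv} + [su \not\leq sv]$ while $d_{x_1,v} = d_{su,v} = d_{su,sv}$ (applying the first case of the recurrence to the pair $su, v$ since $s(su) = u < su$); I would then compare $d_{su,sv}$ with $d_{u,sv}$ using the inductive hypothesis applied inside $[u, sv]$ — whose longest path with respect to $\prec$ has first edge $u \to su$ again — giving $d_{u,sv} \geq d_{su,sv}$, hence $d_{u,v} \geq d_{u,sv} \geq d_{su,sv} = d_{x_1,v}$. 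If instead $su < u$, then $\gamma_0$ does not pass through $su$, so by \Cref{lem:sgamma} we may pass to $s\gamma_0$, a $\prec^s$-increasing path from $su$ to $sv$; I would check $\prec^s$ is Deodhar with respect to $sv$ (this is where a short computation with \Cref{def:uppersconjugate} and $N(sv) = s(N(v) \setminus \{\alpha_s\})$ enters), note $\ell(su, sv) = \ell(u,v)$ but the pair is ``smaller'' in an appropriate sense (e.g. $\ell(sv) < \ell(v)$), and conclude by induction on $\ell(v)$ plus the relations $d_{u,v} = d_{su,sv}$, $d_{x_1,v} = d_{s x_1, sv}$.

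The main obstacle I anticipate is setting up a clean induction that simultaneously decreases in the right parameter in both cases — length $\ell(u,v)$ drops when $su > u$ but $\ell(v)$ is what drops when $su < u$ — so I would induct on the pair $(\ell(v), \ell(u,v))$ lexicographically, or on $\ell(v)$ with an inner induction on $\ell(u,v)$, and carefully verify that passing to the upper $s$-conjugate preserves the Deodhar property. Verifying that $\prec^s$ is Deodhar with respect to $sv$ whenever $\prec$ is Deodhar with respect to $v$ and $\alpha_s$ is $\prec$-minimal is a routine but essential check using \Cref{def:uppersconjugate}, and I expect it to be the technical heart of the argument.
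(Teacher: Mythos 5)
Your overall plan matches the paper's proof: take $s$ with $\alpha_s$ the $\prec$-minimal positive root (so $sv<v$ by the Deodhar hypothesis), split on whether $su>u$ or $su<u$, use \Cref{lem:drecurrence} in the first case, use \Cref{lem:sgamma} and the upper $s$-conjugate $\prec^s$ in the second, and induct on $\ell(v)$. Your observations that one can reduce to the first edge because the Deodhar hypothesis depends only on $v$, and that one must verify $\prec^s$ is Deodhar with respect to $sv$ via $N(sv)=s(N(v)\setminus\{\alpha_s\})$, are both correct and are part of what the paper does (the latter check is asserted there without proof).

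There is, however, a computational slip in your treatment of the case $su>u$ that leads you into an unnecessary and, in one subcase, unavailable detour. Applying the first branch of \Cref{lem:drecurrence} to the pair $(su,v)$ (using $s(su)=u<su$) gives $d_{su,v}=d_{u,sv}$, \emph{not} $d_{su,sv}$ as you wrote. So $d_{x_1,v}=d_{su,v}=d_{u,sv}$, and since the recurrence applied to $(u,v)$ gives $d_{u,v}=d_{u,sv}$ or $d_{u,sv}+1$ (according to whether $su\leq sv$ or not), the inequality $d_{u,v}\geq d_{x_1,v}$ is immediate with no appeal to an inductive hypothesis inside $[u,sv]$. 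In fact the inner induction you propose would not even set up when $su\not\leq sv$: in that case $su\notin[u,sv]$, so the longest path from $u$ to $sv$ does not have first edge $u\to su$. The paper packages all of this as a combined ``two-application'' recurrence, $d_{x,v}=d_{sx,v}$ or $d_{sx,v}+1$ when $sx>x$, from which $d_{u,v}\geq d_{su,v}=d_{x_1,v}$ is read off at once, and then proceeds to the $su<u$ case exactly as you do.
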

\begin{proof}

If $v$ is the identity element, then this is trivial. Otherwise, if $\alpha_s$ is the $\prec$-minimal positive root, then $sv<v$. By (two applications of) \Cref{lem:drecurrence}, we have for any $x \in [u,v]$ that
\[ d_{x,v} = \begin{cases}
        d_{sx,sv} &\text{if $sx<x$}\\
        d_{sx,v}+1 &\text{if $sx>x$ and $sx\not \leq sv$} \\
        d_{sx,v} &\text{if $sx>x$ and $sx\leq sv$}.
    \end{cases} \]
    
Observe that, since $\alpha_s$ is the $\prec$-minimal positive root, the longest path $\gamma_0$ either begins with an edge of the form $u \xrightarrow{\alpha_s} su$ or else contains no edges labeled with $\alpha_s$. Moreover, since $\gamma_0$ is the lex-minimal path from $u$ to $v$, the first case occurs if and only if $su>u$. Hence in the first case, $d_{u,v}$ is equal to either $d_{su,v}+1$ or $d_{su,v}$, as claimed. So assume we are in the second case.

Then \Cref{lem:sgamma} implies that $sx_i < x_i$ for all $i$. It follows that $d_{x_i,v} = d_{sx_i,sv}$ for all $i$. Moreover, \Cref{lem:sgamma} asserts that the path $(sx_0 \to sx_1 \to \cdots \to sx_k)$ is the longest path from $su$ to $sv$ with respect to the reflection order $\prec^s$. Hence the claim follows by induction, since $\prec^s$ is Deodhar with respect to $sv$.
\end{proof}

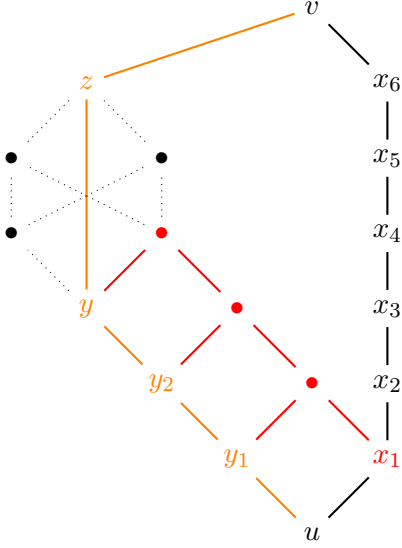
\begin{figure}
    \centering
    \begin{tikzpicture}
        \node (x0) at (0,0) {$u$};
        \node[red] (x1) at (1,1) {$x_1$};
        \foreach \i in {2,...,6} {
        \node (x\i) at (1,\i) {$x_{\i}$};
        }
        \node (x7) at (0,7) {$v$};
        \draw[thick] (x0) -- (x1) -- (x2) -- (x3) -- (x4) -- (x5) -- (x6) -- (x7);

        \node[orange] (y1) at (-1,1) {$y_1$};
        \node[orange] (y2) at (-2,2) {$y_2$};
        \node[orange] (x) at (-3,3) {$y$};
        \node[orange] (y) at (-3,6) {$z$};
        \draw[orange, thick] (x0) -- (y1) -- (y2) -- (x) -- (y) -- (x7);

        \node[red] (c1) at (0,2) {$\bullet$};
        \node[red] (c2) at (-1,3) {$\bullet$};
        \node[red] (c3) at (-2,4) {$\bullet$};
        \draw[red, thick] (x1) -- (c1) -- (c2) -- (c3);
        \draw[red, thick] (y1) edge (c1) (y2) edge (c2) (x) edge (c3);

        \node[black] (L1) at (-4,4) {$\bullet$};
        \node[black] (L2) at (-4,5) {$\bullet$};
        \node[black] (R2) at (-2,5) {$\bullet$};
        \draw[black, dotted] (x) -- (L1) -- (L2) -- (y);
        \draw[black, dotted] (c3) -- (R2) -- (y) (L1) edge (R2) (c3) edge (L2);
    \end{tikzpicture}
    \caption{An example of a supporting chain $\cC$. The longest path $\gamma_0$ is shown in black, the second-length path $\gamma$ is shown in orange, and $\cC$ is shown in red. The cover relations in $[y,z]$ are shown with dotted lines.}
    \label{fig:supportingchain}
\end{figure}

\begin{theorem}\label{thm:gchain}
Let $u\leq v$ be elements of $W$, and let $\prec$ be a reflection order which is Deodhar with respect to $v$. Let $\gamma_0=(u=x_0\to x_1 \to\cdots \to x_{\ell(u,v)}=v)$ be the longest path from $u$ to $v$. Define the set of edges
\[ F_{u,v} \coloneqq \{ (x_i,x_{i+1}) \mid d_{x_{i},v} = d_{x_{i+1},v} + 1  \}. \]
Then $F_{u,v}$ is a diamond-generating set for $[u,v]$ of size $d_{u,v}$.
\end{theorem}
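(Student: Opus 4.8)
The plan is to prove the two assertions — that $|F_{u,v}| = d_{u,v}$, and that $F_{u,v}$ is diamond‑generating — separately. For the size count, note that for $0 \le i < \ell(u,v)$ the tail $x_i \to x_{i+1} \to \cdots \to v$ of $\gamma_0$ is an increasing path of length $\ell(x_i,v)$, hence the longest path from $x_i$ to $v$; thus \Cref{lem:countdiverging} applied to $[x_i,v]$ gives $d_{x_i,v} - d_{x_{i+1},v} \le 1$, while \Cref{lem:ddecreasing} (which is where the Deodhar hypothesis enters) gives $d_{x_i,v} - d_{x_{i+1},v} \ge 0$. So the sequence $(d_{x_i,v})_i$ decreases in steps of $0$ or $1$, the steps of size $1$ being exactly the edges of $F_{u,v}$, and telescoping from $d_{x_0,v}=d_{u,v}$ down to $d_{x_{\ell(u,v)},v}=0$ yields $|F_{u,v}| = d_{u,v}$.

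For diamond‑generation I would first reduce to a single edge. Since $\gamma_0$ has exactly $\ell(u,v)$ edges and each edge of the Bruhat graph strictly raises length, every edge of $\gamma_0$ is a cover, so $\gamma_0$ is a maximal chain of $[u,v]$; by \Cref{lem:chainsdiamondgenerate} it therefore suffices to prove $E(\gamma_0) \subseteq F_{u,v}^{\diamond}$. I would induct on $\ell(u,v)$, the cases $\ell(u,v) \le 2$ (which have no \emph{flat} edge, i.e.\ no $(x_i,x_{i+1})$ with $d_{x_i,v}=d_{x_{i+1},v}$) being immediate. The tail of $\gamma_0$ is the longest path of $[x_1,v]$ for the same order $\prec$, still Deodhar with respect to $v$; since $F_{x_1,v} \subseteq F_{u,v}$ and diamond‑closure in $[x_1,v]$ refines that in $[u,v]$, the inductive hypothesis gives $E_{x_1,v} = F_{x_1,v}^{\diamond} \subseteq F_{u,v}^{\diamond}$. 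Hence every edge of $\gamma_0$ except possibly the first lies in $F_{u,v}^{\diamond}$, and if the first edge $(u,x_1)$ is not flat it lies in $F_{u,v}$ itself. So everything comes down to the claim: \emph{if $(u,x_1)$ is flat, then $(u,x_1)\in F_{u,v}^{\diamond}$, given that $E_{x_1,v}\subseteq F_{u,v}^{\diamond}$.}

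To prove this I would use divergences. When $(u,x_1)$ is flat, $d_{u,v}=d_{x_1,v}$, so \Cref{lem:countdiverging} produces a unique second-length increasing path $\gamma$ from $u$ to $v$ diverging at $u$, with first vertex $y_1\ne x_1$; counting edge lengths forces $\gamma$ to be a sequence of covers together with a single edge of length $3$, spanning a rank‑$3$ interval $[y,z]\subseteq[u,v]$. The plan is to construct, as in \Cref{fig:supportingchain}, a \emph{supporting chain} $\cC=(x_1\lessdot c_1\lessdot c_2\lessdot\cdots)$ lying inside $[x_1,v]$ whose vertices pair off with the successive vertices of $\gamma$ so that $\{u,x_1,c_1,y_1\},\{y_1,c_1,c_2,y_2\},\{y_2,c_2,c_3,y_3\},\ldots$ are genuine $4$-cycles of $\widehat{\Gamma}_{u,v}$ — a staircase of diamonds — with the top of the staircase abutting the bottom of the box $[y,z]$. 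Granting this: the edges of $\cC$ lie in $E_{x_1,v}\subseteq F_{u,v}^{\diamond}$; using \Cref{lem:chainsdiamondgenerate} inside $[y,z]$ (whose relevant portion meets $[x_1,v]$) one puts the bottom box‑edge joining $\gamma$ to $\cC$ into $F_{u,v}^{\diamond}$; and then, walking down the staircase and at each step applying the diamond rule to two already‑known consecutive edges of the current $4$-cycle, one forces each rung and finally $(u,x_1)$ into $F_{u,v}^{\diamond}$.

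The main obstacle is exactly the construction of the supporting chain: producing $\cC\subseteq[x_1,v]$ with precisely the required cover relations, and verifying that the quadruples above are genuine $4$-cycles with the staircase correctly placed against $[y,z]$ — equivalently, controlling how and where the diverging path $\gamma$ re‑meets $[x_1,v]$. This is the step for which Patimo invoked the generalized lifting property, and replacing it calls for a direct combinatorial analysis of divergences of increasing paths under a Deodhar reflection order; once $\cC$ is in hand, the diamond‑chase is routine bookkeeping.
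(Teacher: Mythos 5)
Your proposal correctly handles the size count (telescoping $d_{x_i,v}$ along $\gamma_0$ via Lemmas \ref{lem:countdiverging} and \ref{lem:ddecreasing}), and your structural reduction for diamond-generation — showing $\gamma_0$ is a maximal chain, inducting so that $E_{x_1,v}\subseteq F_{u,v}^{\diamond}$, and reducing everything to the flat-first-edge case — matches the paper's proof. The observation that $\gamma$ must consist of covers with exactly one length-$3$ edge is also correct and used in the paper.

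However, you have explicitly left a gap at the crucial step: you reduce the theorem to the existence of a supporting chain $\cC\subseteq[x_1,v]$ forming a "staircase of diamonds" alongside $\gamma$, and you explicitly decline to construct it, saying that doing so "calls for a direct combinatorial analysis of divergences of increasing paths under a Deodhar reflection order." That analysis is precisely the new content of the paper's proof and the reason the result extends beyond Patimo's simply-laced finite case. The paper constructs $\cC$ by the following inductive argument: let $\alpha_s$ be the $\prec$-minimal root (so $sv<v$ since $\prec$ is Deodhar w.r.t.\ $v$). Because $\gamma$ diverges at $u$, none of its early edges are labeled $\alpha_s$, so Lemma \ref{lem:sgamma} lets one transport $\gamma$ (and $\gamma_0$) by $s$ to $\prec^s$-increasing paths. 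If $su>u$, then all vertices of $\gamma$ in $[u,y]$ are raised by $s$ (since $s\gamma$ has only length-$1$ edges), and $\cC:=\{sy_0,\ldots,sy_r\}$ works directly. If $su<u$, one inducts on $\ell(v)$ to get a supporting chain $\cC'$ in $[su,sv]$ (noting $\prec^s$ is Deodhar w.r.t.\ $sv$), then either sets $\cC:=s\cC'$ or, when $s$ switches from lowering to raising along $\gamma$ at some index $j$, sets $\cC:=s\cC'\sqcup\{sy_{j+1},\ldots,sy_r\}$, with a lifting-property argument ruling out the bad configuration $sc_i>c_i$, $sc_{i+1}<c_{i+1}$ and placing the top of the chain below $z$. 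Without this construction, your proof is incomplete, and since you acknowledge that you do not know how to perform it, this is a genuine gap rather than an omitted routine verification.
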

\begin{proof}
The fact that $|F_{u,v}|=d_{u,v}$ follows from \Cref{lem:countdiverging,lem:ddecreasing}.

We will show that $F_{u,v}$ diamond-generates by induction on $\ell(u,v)$. If $\ell(u,v) = 0$ then we are done, so assume $\ell(u,v)\geq 1$. First note that $F_{u,v} \cap E_{x_1,v} = F_{x_1,v}$. Hence, by induction, $F_{x_1,v}$ is a diamond-generating set for $[x_1,v]$. By \Cref{lem:chainsdiamondgenerate}, $E_{x_1,v}\cup \{(u,x_1)\}$ diamond-generates $[u,v]$. Thus if $F_{u,v}$ contains $(u,x_1)$ (equivalently, if $d_{u,v} = d_{x_1,v}+1$), then $F_{u,v}$ is diamond-generating.

So assume that $d_{u,v} = d_{x_1,v}$ (which is the only other case, by \Cref{lem:ddecreasing}). In this case, we will show that the diamond-closure of $E_{x_1,v}$ in $[u,v]$ contains the edge $(u,x_1)$ and therefore (again by \Cref{lem:chainsdiamondgenerate}) that $E_{x_1,v}$ (and hence $F_{u,v}$) is diamond-generating for $[u,v]$.

Since $d_{u,v}=d_{x_1,v}$, there is a unique second-length path diverging at $u$ by \Cref{lem:countdiverging}. Denote this second-length path by $\gamma$. There is a unique edge of length 3 in $\gamma$, say $y \to z$. Define a \newword{supporting chain} to
be a saturated chain $\cC \subseteq [u,v]$ (meaning that cover relations in $\cC$ are also cover relations in $[u,v]$) such that: 
\begin{itemize}
    \item $\cC$ is disjoint from $\gamma$; and
    \item The minimal element of $\cC$ is $x_1$; and
    \item The maximal element of $\cC$ is an atom of $[y,z]$; and
    \item For each $c\in \cC$, there is a vertex of $\gamma$ which is covered by $c$.
\end{itemize}
See \Cref{fig:supportingchain} for an example of a supporting chain.

\noindent\textbf{Claim:} A supporting chain exists.
\begin{proof}[Proof of claim]
\renewcommand{\qedsymbol}{$\blacksquare$}
    Let the vertices of $\gamma$ in the interval $[u,y]$ be $u=y_0,y_1,\ldots,y_r=y$.

    Let $s\in S$ be such that $\alpha_s$ is the $\prec$-minimal positive root. Then $sv<v$. The fact that $\gamma$ diverges at $u$ implies that none of the edge labels $y_i\xrightarrow{\alpha} y_{i+1}$ are equal to $\alpha_s$. 
    
    First assume $su > u$. Using \Cref{lem:sgamma}, we see that $s\gamma$ is an increasing path from $su$ to $sv$ with respect to $\prec^s$. Since $\ell(s\gamma) = \ell(su,sv)$, all edges of $s\gamma$ must be length $1$. Since $\gamma$ has one edge of length $3$, it must be that $sw>w$ for all vertices $w$ of $\gamma$ in $[u,y]$, and $sw<w$ for all vertices $w$ of $\gamma$ in $[z,v]$. In particular, $sy_i>y_i$ for all $i$ and $sz<z$. By the lifting property for Bruhat order \cite[Proposition 2.2.7]{Bjorner-Brenti}, this implies $sy_r < z$. Hence the chain $\cC\coloneqq \{sy_0, sy_1,\ldots,sy_r\}$ is a supporting chain.

    We now assume instead that $su < u$. Using \Cref{lem:sgamma}, we see that both $s\gamma_0$ and $s\gamma$ are increasing paths from $su$ to $sv$ with respect to $\prec^s$. Moreover, $s\gamma_0$ is the longest path with respect to $\prec^s$, and $s\gamma$ is a second-length path diverging at $su$. As in the proof of \Cref{lem:ddecreasing}, we have $d_{x_i,x_{i+1}}=d_{sx_i,sx_{i+1}}$ for all $i$. In particular, $d_{su,sv}=d_{sx_1,sv}$. Hence by induction on $\ell(v)$, there is a supporting chain $\cC'$ for the path $s\gamma$ in $[su,sv]$.

\begin{figure}
    \centering
    \begin{tikzpicture}
        \node (x0) at (0,0) {$u$};
        \node[red] (x1) at (1,1) {$x_1$};
        \foreach \i in {2,...,5} {
        \node (x\i) at (1,\i) {$x_{\i}$};
        }
        \node (x6) at (0,7) {$v$};
        \draw[thick] (x0) -- (x1) -- (x2) -- (x3) -- (x4) -- (x5) -- (x6);

        \node[orange] (y1) at (-2,1) {$y_1$};
        \node[orange] (x) at (-2,2) {$y$};
        \node[orange] (y) at (-2,5) {$z$};
        \draw[orange, thick] (x0) -- (y1) -- (x) -- (y) -- (x7);

        \node (su) at (-1,-1) {$su$};
        \node (sy1) at (-3,0) {$sy_1$};
        \node[red] (sx) at (-3,3) {$sy$};
        \node (sy) at (-3,4) {$sz$};
        \node[purple] (sx1) at (-2,0) {$sx_1$};
        
        \draw[blue] (x0) edge (su) (y1) edge (sy1) (x) edge (sx) (y) edge (sy) (x1) edge (sx1);

        \draw[orange, dashed] (su) -- (sy1) -- (sx) -- (sy);

        \node[purple] (cp1) at (-4,1) {$\circ$};
        \draw[purple, dashed] (sx1) -- (cp1) -- (sy1);
        \node[red] (L2p) at (-4,2) {$\bullet$};
        \draw[blue] (cp1) -- (L2p);
        \draw[dotted] (su) edge (sx1) (cp1) edge (x) (y1) edge (L2p);

        \draw[red] (L2p) edge (sx);
        \draw[red] (x1) -- (L2p);


    \end{tikzpicture}
    \caption{
    The inductive construction of a supporting chain. The action of $s$ is indicated with blue edges. The second-length path for $[su,sv]$ is shown with dashed orange lines and the supporting chain $\cC'=\{\textcolor{purple}{sx_1,\circ}\}$ is shown in purple. The supporting chain for $[u,v]$ is $\cC=\{\textcolor{red}{x_1,\bullet,sy}\}$, shown in red. 
    }
    \label{fig:doubledescent}
\end{figure}
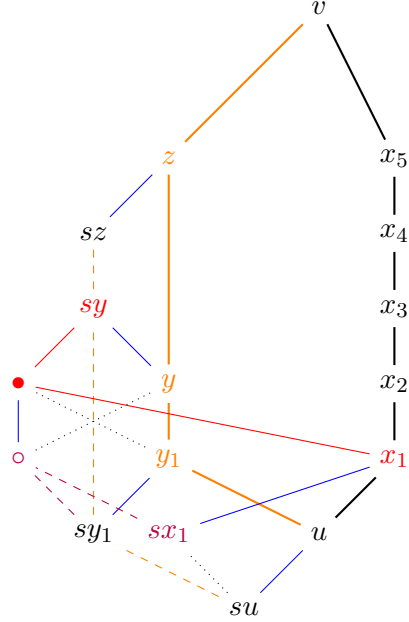

    If $sy<y$, then $sw<w$ for every vertex $w$ on $\gamma$. In this case, $\cC \coloneqq s\cC'$ is a supporting chain for $\gamma$. Otherwise, we are in the case where $sy>y$. This case is illustrated in \Cref{fig:doubledescent}. There is some index $j$ so that $sy_0<y_0, sy_1<y_1,\ldots, sy_j<y_j$ and $sy_{j+1}>y_{j+1},\ldots,sy_r>y_r$. In particular, $sy_j \to sy_{j+1}$ is the unique length 3 edge of $s\gamma$. We set $\cC \coloneqq s\cC' \sqcup \{sy_{j+1},\ldots, sy_r\}$. 

    Write the elements of $\cC'$ in order as $c_0,\ldots,c_k$.
    Assume to the contrary that there exist $c_i,c_{i+1}$ such that $sc_i>c_i$ and $sc_{i+1}<c_{i+1}$. This would imply, by the lifting property of Bruhat order, that $sc_i=c_{i+1}$. Moreover, since $c_i$ covers $sy_i$, the lifting property further implies that $s\cdot sy_i < sy_i$, which is a contradiction. Hence there can be no $i$ such that $sc_i < c_i$. From this we can see that $\cC$ is a saturated chain, since $\cC'\sqcup\{y_{j+1},\ldots,y_r\}$ is a saturated chain, each of whose elements is taken upward by $s$. Moreover $sy_r \leq z$, by using the lifting property with the fact that $sz<z$. Hence $\cC$ is a supporting chain.
\end{proof}

Now that we have a supporting chain $\cC$, it is easy to see (for instance, by examining \Cref{fig:supportingchain}) that because $\cC$ is contained in $[x_1,v]$, the diamond-closure of $E_{x_1,v}$ in $[u,v]$ includes every (cover-relation) edge between two elements of $\cC$, between an element of $\cC$ and a vertex of $\gamma$, and between two vertices of $\gamma$ that are below elements of $\cC$. In particular, the edge $(u,x_1)$ is in the diamond-closure, so $F_{u,v}$ is a diamond-generating set for $[u,v]$.
\end{proof}

\begin{proof}[Proof of \Cref{cor:cic-for-coefficients}]
    This follows from \Cref{thm:d-equals-g} and \Cref{lem:dincarnations}.
\end{proof}

\begin{proof}[Proof of \Cref{cor:cic-length-5-6}]
    \Cref{cor:cic-for-coefficients} gives combinatorial invariance for $[q^{\ell(u,v)-2}]\tR_{u,v}$. Since combinatorial invariance was also known for $[q^1]\tR_{u,v}, [q^2]\tR_{u,v},$ and $[q^{\ell(u,v)}]\tR_{u,v}$, it follows that the $\tR$-polynomial for an interval of length 5 or 6 is a combinatorial invariant (since all powers of $q$ appearing in $\tR_{u,v}$ have the same parity as $\ell(u,v)$). Since $R_{u,v}(q) = q^{\ell(u,v)/2}\tR_{u,v}(q^{1/2}-q^{-1/2})$ and (by \Cref{thm:KLexistence}) $P_{u,v}$ can be computed from the values of $R_{x,y}$ for $u<x\leq y \leq  v$, we deduce that the $R$- and $P$-polynomials of an interval of length 5 or 6 are also combinatorially invariant.
\end{proof}

\section{Application to Gabber--Joseph coefficients}
\label{sec:gj}

In this section, we return to the case where $W$ is the Weyl group of a symmetrizable Kac--Moody group $G$ with Richardson varieties $(\cR_{u,v})_{u,v\in W}$, as in \Cref{sec:defRichardson}.

The cohomology groups of a complex algebraic variety are equipped with a mixed Hodge structure, inducing a Deligne splitting that we denote $H^k = \bigoplus_{p,q} H^{k,(p,q)}$.  We briefly discuss the mixed Hodge structure of Richardson varieties.

\begin{proposition}\label{prop:van}
    Let $W$ be the Weyl group of a complex semisimple Lie group and $u,v \in W$. Then the mixed Hodge spaces $H^{1,(0,0)}(\cR_{u,v};\mathbb{C})$ and $H^{2,(1,1)}(\cR_{u,v};\mathbb{C})$ vanish.
\end{proposition}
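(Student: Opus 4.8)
The plan is to deduce both vanishings from the mixed Hodge theory of the smooth variety $\cR_{u,v}$, the second and harder one after reducing it to a statement about $\operatorname{Pic}(\cR_{u,v})$. Recall first that $\cR_{u,v}$ is smooth — the Bruhat cell $BvB/B$ and the opposite cell $B^-uB/B$ meet transversally in $G/B$ — and that it is rational of dimension $\ell(u,v)$, since the unique top-dimensional piece of its Deodhar decomposition is a dense open subset isomorphic to $(\mathbb{C}^*)^a\times\mathbb{C}^b$. The vanishing of $H^{1,(0,0)}(\cR_{u,v};\mathbb{C})$ then holds for the trivial reason that it holds for any smooth complex variety $U$: Deligne's theory gives $W_0H^1(U;\mathbb{C})=0$, and a pure Hodge structure of weight $0$ is of type $(0,0)$, so $H^{1,(0,0)}(U)\subseteq\operatorname{Gr}^W_0H^1(U;\mathbb{C})=0$.

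For $H^{2,(1,1)}(\cR_{u,v};\mathbb{C})$, I would fix a smooth projective compactification $X\supseteq\cR_{u,v}$ whose boundary $D=\bigcup_i D_i$ is a simple normal crossings divisor. Since $\cR_{u,v}$ is rational, so is $X$, hence $b_1(X)=0$, $\operatorname{Pic}^0(X)=0$, $H^{2,0}(X)=0$, and $H^2(X;\mathbb{C})=H^{1,1}(X)=\operatorname{NS}(X)_{\mathbb{Q}}$. The Gysin sequence for $\cR_{u,v}=X\setminus D$ contains
\[
\bigoplus_i H^0(D_i)(-1)\;\xrightarrow{\ \gamma\ }\;H^2(X;\mathbb{C})\;\xrightarrow{\ j^*\ }\;H^2(\cR_{u,v};\mathbb{C}),
\]
where $\gamma$ carries the generator of the $i$-th summand to $[D_i]$; by strictness of morphisms of mixed Hodge structures, $j^*$ maps $H^2(X)$ onto $W_2H^2(\cR_{u,v};\mathbb{C})$ with kernel $\langle[D_i]\rangle$. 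Thus $W_2H^2(\cR_{u,v};\mathbb{C})\cong H^2(X;\mathbb{C})/\langle[D_i]\rangle=\operatorname{NS}(X)_{\mathbb{Q}}/\langle[D_i]\rangle=\operatorname{Pic}(\cR_{u,v})_{\mathbb{Q}}$. Being a quotient of the type-$(1,1)$ Hodge structure $H^2(X)$ by the type-$(1,1)$ subspace $\langle[D_i]\rangle$, this group is of pure type $(1,1)$; since $H^{2,(1,1)}(\cR_{u,v};\mathbb{C})$ is exactly the $(1,1)$-part of $W_2H^2(\cR_{u,v};\mathbb{C})$, we get $H^{2,(1,1)}(\cR_{u,v};\mathbb{C})=W_2H^2(\cR_{u,v};\mathbb{C})\cong\operatorname{Pic}(\cR_{u,v})_{\mathbb{Q}}$. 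So the proposition is equivalent to the assertion that $\operatorname{Pic}(\cR_{u,v})$ is torsion.

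The main obstacle is to establish this last fact — in fact, that $\operatorname{Pic}(\cR_{u,v})=0$. I would argue by induction on $\ell(v)$, reducing to the case $u=e$. When $u=e$, $\cR_{e,v}=(BvB/B)\cap(B^-B/B)$ is the complement in the affine space $BvB/B\cong\mathbb{A}^{\ell(v)}$ of the divisor $\bigcup_{s\in S}\bigl((BvB/B)\cap\overline{B^-sB/B}\bigr)$, so $\operatorname{Pic}(\cR_{e,v})=0$. For $u\neq e$, pick $s\in S$ with $sv<v$. If also $su<u$, the $\mathbb{P}^1$-bundle $G/B\to G/P_s$ restricts to an isomorphism $\cR_{u,v}\cong\cR_{su,sv}$ (mirroring the recurrence $R_{u,v}=R_{su,sv}$ of \Cref{lem:drecurrence}), and we conclude by induction on $\ell(v)$. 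If $su>u$, the same bundle decomposes $\cR_{u,v}$ as the union of a dense open $\mathbb{G}_m$-bundle $\cR^\circ$ over $\cR_{u,sv}$ and a complementary closed divisor $\cR^{\mathrm{cl}}$ that is an affine-line bundle over $\cR_{su,sv}$ (mirroring $R_{u,v}=(q-1)R_{u,sv}+qR_{su,sv}$); inductively $\operatorname{Pic}(\cR^\circ)=0$, so $\operatorname{Pic}(\cR_{u,v})$ is cyclic, generated by $[\cR^{\mathrm{cl}}]$, and it remains to show this class is torsion. The delicate point is thus to produce, on $\cR^\circ$, an invertible function with a simple zero along $\cR^{\mathrm{cl}}$ — for instance by identifying $\cR^{\mathrm{cl}}$ with the vanishing locus of an explicit regular function built from a generalized minor. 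Alternatively, one may simply invoke the known triviality of $\operatorname{Pic}(\cR_{u,v})$ (equivalently, the factoriality of the coordinate ring $\mathbb{C}[\cR_{u,v}]$); either way $W_2H^2(\cR_{u,v};\mathbb{C})=0$, which completes the proof.
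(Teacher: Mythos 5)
Your argument for the vanishing of $H^{1,(0,0)}$ is the same as the paper's (both amount to: $H^1$ of a smooth variety has weights $\geq 1$). For $H^{2,(1,1)}$, however, you take a genuinely different route. The paper works directly with mixed Hodge structures on $\cR_{u,v}$ itself, invoking \cite[Lemma 4.3.1]{riche-soergel-williamson}, which produces a decomposition $\cR_{u,v} = U \sqcup Z$ with $Z$ a smooth hypersurface \emph{cut out by a global regular function $f$}, $U \cong \CC^\times \times \cR'$, and $Z \cong \CC \times \cR''$; it then shows by induction that $H^{2,(1,1)}(U) = 0$, observes via the Gysin sequence that the only possible source of $H^{2,(1,1)}(\cR_{u,v})$ is the image of $H^0(Z)$, and kills it by showing the residue map $H^1(U) \to H^0(Z)$ is onto, using the class $[df/f]$. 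Your proof instead compactifies, uses rationality to identify $W_2 H^2(\cR_{u,v};\mathbb{Q})$ with $\operatorname{Pic}(\cR_{u,v})_\mathbb{Q}$, and reduces the proposition to the statement $\operatorname{Pic}(\cR_{u,v})_\mathbb{Q}=0$. This reduction is correct and arguably more conceptual. Your inductive scheme for $\operatorname{Pic}(\cR_{u,v})=0$ is also essentially right: the decomposition $\cR_{u,v} = \cR^\circ \sqcup \cR^{\mathrm{cl}}$ you describe is the same as the paper's $U \sqcup Z$, and induction gives $\operatorname{Pic}(\cR^\circ)=0$, so $\operatorname{Pic}(\cR_{u,v})$ is generated by $[\cR^{\mathrm{cl}}]$.

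The gap is at exactly the point you flag: you must show $[\cR^{\mathrm{cl}}]$ is principal, i.e.\ that $\cR^{\mathrm{cl}}$ is cut out by a global regular function on $\cR_{u,v}$. You gesture at generalized minors and at ``the known triviality of $\operatorname{Pic}(\cR_{u,v})$'' without a reference or a construction; as written, this is a placeholder, not a proof. Notice that this is precisely the same input the paper needs (the function $f$ with $Z = \{f=0\}$), and the paper supplies it by citing the Riche--Soergel--Williamson lemma, which packages the existence of such an $f$ together with the product descriptions of $U$ and $Z$. If you invoke that lemma, your $\operatorname{Pic}$-theoretic induction closes cleanly and is a valid alternative to the paper's residue argument; without it, or some equally explicit substitute (e.g.\ a generalized-minor construction of $f$, or a citation for factoriality of $\mathbb{C}[\cR_{u,v}]$), the proof is incomplete at its central step.
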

\begin{proof}
By \cite[Lemma.~4.3.1]{riche-soergel-williamson}, there exists a decomposition $\cR_{u,v} = U \sqcup Z$ where $Z$ is a smooth hypersurface and $U$ is the (open) complement, and such that $U \cong \CC^\times \times \cR'$ and $Z \cong \CC \times \cR''$ for Richardson varieties $\cR', \cR''$ of lower dimension. It follows from the induction, the Gysin exact sequence (see below), and the decomposition $\cR_{u,v} = U \sqcup Z$ that Richardson varieties are mixed Tate, so we have $H^k(\cR_{u,v},\mathbb{C}) = \bigoplus_{0 \leq p \leq k} H^{k,(p,p)}(\cR_{u,v},\mathbb{C})$.  

The vanishing of $H^{1,(0,0)}(\cR_{u,v};\mathbb{C})$ holds since $\cR_{u,v}$ is smooth, which implies that 
\begin{equation}
    \label{eq:smooth}
H^k(\cR_{u,v},\mathbb{C}) = \bigoplus_{\lceil k/2 \rceil \leq p \leq k} H^{k,(p,p)}(\cR_{u,v},\mathbb{C}).
\end{equation}
We have
$$
H^2(\cR_{u,v}) = H^{2,(2,2)}(\cR_{u,v}) \oplus H^{2,(1,1)}(\cR_{u,v}).
$$

We proceed by induction on $\ell(u,v)$.  The cases $\ell = 0,1$ are trivial. We have the Gysin exact sequence
$$
H^1(U) \to H^0(Z) \to H^2(\cR_{u,v}) \to H^2(U) \to H^1(Z) \to H^3(\cR_{u,v}) \to
$$
The map $H^2(\cR_{u,v}) \to H^2(U)$ preserves the mixed Hodge decomposition, and 
\[
H^{2,(1,1)}(U) = H^{2,(1,1)}(\cR') \oplus H^{1,(1,1)}(\cR') = H^{2,(1,1)}(\cR') = 0,
\]
by induction.  Thus $H^{2,(1,1)}(\cR_{u,v})$ lies in the image of the map $H^0(Z) \to H^2(\cR_{u,v})$.  We show that the map $H^0(Z) \to H^2(\cR_{u,v})$ vanishes.  Since $Z$ is connected, $\dim H^0(Z) = 1$, and it suffices to show that $H^1(U) \to H^0(Z)$ is non-zero.  Let $f$ be the regular function on $\cR_{u,v}$ such that $Z = \{f = 0\}$ is cut out by $f$ to order one.  Then $df/f$ is a regular 1-form on $U$.  The image of the class $[df/f] \in H^1(U)$ (viewed as a cohomology class in algebraic de Rham cohomology) in $H^0(Z)$ is the class of the residue ${\rm Res}_{f = 0}(df/f)$, which is equal to 1.  This class spans $H^0(Z)$.  We conclude that $H^{2,(1,1)}(\cR_{u,v}) = 0$, completing the inductive step.
\end{proof}

\begin{remark}
    \Cref{prop:van} can also be deduced using properties of the mixed Hodge structures of cluster varieties \cite{lam-speyer} (Richardson varieties are known to be cluster varieties by \cite{galashin2025braidvarietyclusterstructures,casals-gorsky-gorsky-le-shen-simental}).
\end{remark}

We obtain the following corollary by interpreting $R_{u,v} = \#\cR_{u,v}(\mathbb{F}_q)$ as the point count of a Richardson variety over $\mathbb{F}_q$ \cite{Kazhdan-Lusztig-1}. 
\begin{corollary}\label{cor:duv}
Let $u \leq v$ be elements of a finite Weyl group $W$. Then
    \[ d_{u,v} = \dim H^{2\ell(u,v)-1}_c(\cR_{u,v};\mathbb{C}) = \dim H^{1}(\cR_{u,v};\mathbb{C}), \]
    where $H_c^\bullet$ denotes compactly supported singular cohomology, and $H^\bullet$ denotes singular cohomology. 
\end{corollary}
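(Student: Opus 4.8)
The plan is to read off the coefficient of $q^{\ell(u,v)-1}$ in $R_{u,v}$ from the weight filtration of the cohomology of $\cR_{u,v}$, combining the point-count interpretation $R_{u,v}(q) = \#\cR_{u,v}(\mathbb{F}_q)$ with Poincaré duality and the vanishing statements of \Cref{prop:van}. Write $n \coloneqq \ell(u,v) = \dim_\CC \cR_{u,v}$. The proof of \Cref{prop:van} already shows that $\cR_{u,v}$ is mixed Tate, so in the Deligne splitting $H^k_c(\cR_{u,v};\CC) = \bigoplus_p H^{k,(p,p)}_c(\cR_{u,v};\CC)$, and since $\cR_{u,v}$ is split over $\mathbb{Z}$ and of polynomial count, the Grothendieck--Lefschetz trace formula together with the standard comparison between $\ell$-adic Frobenius weights and the mixed Hodge structure of a mixed Tate variety yields
\[
R_{u,v}(q) = \sum_{k,p} (-1)^k \dim H^{k,(p,p)}_c(\cR_{u,v};\CC)\, q^p .
\]
In particular $[q^{n-1}]R_{u,v} = \sum_k (-1)^k \dim H^{k,(n-1,n-1)}_c(\cR_{u,v};\CC)$.

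Next I would apply Poincaré duality: since $\cR_{u,v}$ is smooth of pure dimension $n$, there is an isomorphism of mixed Hodge structures $H^k_c(\cR_{u,v};\CC) \cong H^{2n-k}(\cR_{u,v};\CC)^\vee(-n)$, whence $\dim H^{k,(n-1,n-1)}_c(\cR_{u,v}) = \dim H^{2n-k,(1,1)}(\cR_{u,v})$. Re-indexing by $j = 2n-k$ (and using that $2n$ is even) rewrites the coefficient as $\sum_j (-1)^j \dim H^{j,(1,1)}(\cR_{u,v})$. By \eqref{eq:smooth}, $H^{j,(1,1)}(\cR_{u,v})$ can be nonzero only for $j \in \{1,2\}$; \Cref{prop:van} kills the $j=2$ term, and \eqref{eq:smooth} together with the vanishing of $H^{1,(0,0)}(\cR_{u,v})$ identifies $H^{1,(1,1)}(\cR_{u,v})$ with the full space $H^1(\cR_{u,v};\CC)$. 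Therefore $[q^{n-1}]R_{u,v} = -\dim H^1(\cR_{u,v};\CC)$, and comparing with $R_{u,v} = q^n - d_{u,v}q^{n-1} + \cdots$ gives $d_{u,v} = \dim H^1(\cR_{u,v};\CC)$. The remaining equality $\dim H^{2n-1}_c(\cR_{u,v};\CC) = \dim H^1(\cR_{u,v};\CC)$ is one more instance of Poincaré duality, via $H^{2n-1}_c(\cR_{u,v}) \cong H^1(\cR_{u,v})^\vee(-n)$, and $2n-1 = 2\ell(u,v)-1$.

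The main obstacle is the bookkeeping in the first step: one must justify that the coefficients of the counting polynomial $R_{u,v}(q)$ genuinely record the dimensions of the weight-graded pieces of $H^\bullet_c(\cR_{u,v})$, which requires knowing that $\cR_{u,v}$ is polynomial count in the strong sense and that its $\ell$-adic (equivalently Hodge) weights are pure Tate. This is precisely where the mixed-Tate property proved in \Cref{prop:van} enters. Once that comparison is set up, the rest is a one-coefficient extraction, and the only a priori dangerous contribution, namely $H^{2,(1,1)}(\cR_{u,v})$, is exactly the class that \Cref{prop:van} was designed to annihilate.
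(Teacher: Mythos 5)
Your proposal is correct and follows essentially the same route as the paper's proof: interpret $R_{u,v}(q)$ as the $\mathbb{F}_q$-point count, use Grothendieck--Lefschetz together with the mixed-Tate/polynomial-count comparison to express $[q^{\ell(u,v)-1}]R_{u,v}$ as an alternating sum of weight-graded pieces of $H^\bullet_c$, transfer to $H^\bullet$ by Poincar\'e duality, and then invoke \Cref{prop:van} and \eqref{eq:smooth} to isolate $-\dim H^1$. The only cosmetic difference is that you apply Poincar\'e duality as an isomorphism of mixed Hodge structures over $\CC$, while the paper phrases the duality step in terms of Frobenius eigenvalues before passing to complex cohomology; also note that \eqref{eq:smooth} with $k=1$ already gives $H^1=H^{1,(1,1)}$ directly, so the separate appeal to the vanishing of $H^{1,(0,0)}$ is redundant.
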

\begin{proof}
Let $(\cR_{u,v})_{\mathbb{F}_q}$ denote the Richardson variety defined over $\overline{\mathbb{F}}_q$.  The Frobenius eigenvalues of the \'etale cohomology groups $H^{k}_c((\cR_{u,v})_{\mathbb{F}_q};\overline{\mathbb{Q}}_\ell)$ are known to be of the form $q^a$, where $a$ is an integer.  (This can be shown using induction and the decompositions $\cR_{u,v} = U \sqcup Z$.)  

By the Grothendieck--Lefschetz trace formula, the coefficient of $q^{\ell(u,v)-1}$ in $R_{u,v}(q)$ is equal to an alternating count of the appearances of the Frobenius eigenvalue $q^{\ell(u,v)-1}$ in $H^\bullet_c((\cR_{u,v})_{\mathbb{F}_q};\overline{\mathbb{Q}}_\ell)$, which by Poincar\'{e} duality corresponds to the Frobenius eigenvalue $q$ in $H^\bullet((\cR_{u,v})_{\mathbb{F}_q};\overline{\mathbb{Q}}_\ell)$.  By standard comparison results, this count is equal to $$\sum_k (-1)^k \dim H^{k,(1,1)}(\cR_{u,v};\mathbb{C}) = (-1)^1 \dim H^{1,(1,1)}(\cR_{u,v};\mathbb{C}) = - \dim H^{1}(\cR_{u,v};\mathbb{C}),$$
where we have used \Cref{prop:van} and \eqref{eq:smooth}.  
\end{proof}

\begin{remark}
    Both \Cref{prop:van} and \Cref{cor:duv} hold for Richardson varieties in arbitrary Kac--Moody groups. 
\end{remark}

\begin{proof}[Proof of \Cref{thm:gj}]
It is known (see, e.g. \cite[Prop.~4.2.1]{riche-soergel-williamson}) that 
\[
\Ext_{\mathcal{O}}^\ast(M_u,M_v) \cong H_c^{\ast+\ell(u,v)}(\cR_{u,v}).
\]
Hence by \Cref{cor:duv}, we have that $\dim \Ext_{\mathcal{O}}^\ast(M_u,M_v)=d_{u,v}$. The combinatorial invariance of this quantity then follows from \Cref{thm:d-equals-g}.
\end{proof}

\section{Connection to cluster structure on Richardson varieties}
\label{sec:cluster}

In this section, we briefly describe the features of the cluster structure on a Richardson variety $\cR_{u,v}$ which motivated the definition of the diamond-generating set $F_{u,v}$ from \Cref{thm:gchain} and derive some consequences for the cluster structure. The proof of the theorem, however, does not depend on this connection. Indeed \Cref{thm:gchain} applies in arbitrary Coxeter groups, while Richardson varieties exist only in crystallographic type, and are known to be cluster varieties only for finite Weyl groups (see \cite{bao2025upperclusterstructurekacmoody} for an \emph{upper} cluster structure in crystallographic type).

Let $W$ be the Weyl group of a complex semisimple Lie group $G$, and let $u,v\in W$. We write $\CC[\cR_{u,v}]$ for the coordinate ring of the Richardson variety $\cR_{u,v}$. A \newword{cluster structure} on $\cR_{u,v}$ includes distinguished elements of $\CC[\cR_{u,v}]$, called \newword{cluster variables}, together with distinguished sets of cluster variables called \newword{clusters}\footnote{In addition to cluster variables and clusters, there is more data required to give a cluster structure, but we shall not discuss it here. See \cite{fomin-zelevinsky-cluster-algebras-1} for a definition of cluster algebra. }. Each cluster contains $\dim \cR_{u,v} =\ell(u,v)$ cluster variables. The cluster variables which are in every cluster are called \newword{frozen variables}. The remaining cluster variables are called \newword{mutable variables}.

\begin{proposition}
\label{prop:cluster-sizes}
    Let $u \leq v$ be elements of a finite Weyl group $W$. The diamond-generating set $F_{u,v}$ is in bijection with the set of frozen variables for the cluster structure on $\mathbb{C}[\cR_{u,v}]$. Consequently, there are $d_{u,v}$ frozen and $\ell(u,v)-d_{u,v}$ mutable variables in each cluster, and the number of these is a combinatorial invariant.
\end{proposition}
\begin{proof}
    The bijection is constructed in the remainder of this section. As noted in the proof of \Cref{thm:gchain}, we have $|F_{u,v}|=d_{u,v}$. This quantity is combinatorially invariant by \Cref{thm:d-equals-g}.
\end{proof}

In order to construct the desired bijection, we now discuss the specifics of the cluster structure introduced in \cite{galashin2025braidvarietyclusterstructures}. Let $\mathbf{v} = s_1\cdots s_{\ell(v)}$ be a reduced expression for $v$. Then \emph{loc.~cit.}~defines a cluster associated to $\mathbf{v}$, which we denote $X_{\mathbf{v}}$. To index the variables in this cluster, they use the notion of a \emph{subexpression} for $u$. We write $\mathbf{v}_{(k)} = s_{1}\cdots s_{k}$ for the $k$th partial product of $\mathbf{v}$. We will similarly describe subexpressions of $\mathbf{v}$ by their partial products; hence a subexpression corresponds to a sequence $\mathbf{u}_{(k)}$ such that $\mathbf{u}_{(0)} = e$, $\mathbf{u}_{(\ell(v))}=u$, and, for all $1\leq k \leq \ell(v)$, $\mathbf{u}_{(k)} = \mathbf{u}_{(k-1)}s_{k}$ or $\mathbf{u}_{(k)} = \mathbf{u}_{(k-1)}$.

\begin{definition}
    The \newword{positive subexpression} of $\mathbf{v}$ is the subexpression $\mathbf{u}^+$ whose partial products are defined recursively by 
    \[ \mathbf{u}^+_{(\ell(v))} = u  \]
    \[ \mathbf{u}^+_{(k-1)} = \min(\mathbf{u}^+_{(k)}, \mathbf{u}^+_{(k)}s_{k}).  \]
    Given a subexpression $\mathbf{u}$, we set $J_{\mathbf{u}} \coloneqq \{k\in [\ell(v)] \mid \mathbf{u}_{(k)} = \mathbf{u}_{(k-1)}\}.$
\end{definition}

The cluster variables in $X_{\mathbf{v}}$ are indexed by $J_{\mathbf{u}^+}$. Among these, the mutable variables correspond to certain ``almost positive subexpressions''. 

\begin{definition}
    For $a\in J_{\mathbf{u}^+}$, let $\mathbf{u}^{\langle a\rangle}_{(\ell(v))} \coloneqq u$ and define  recursively 
    \[\mathbf{u}^{\langle a\rangle}_{(k-1)} \coloneqq \begin{cases}
        \max(\mathbf{u}_{(k)}^{\langle a\rangle}, \mathbf{u}^{\langle a\rangle}_{(k)}s_{k}) &\text{if $k=a$} \\
        \min(\mathbf{u}_{(k)}^{\langle a\rangle},  \mathbf{u}^{\langle a\rangle}_{(k)}s_{k}) &\text{if $k\neq a$} .
    \end{cases}  \]
    We say $a\in J_{\mathbf{u}^+}$ is \newword{mutable} if $\mathbf{u}^{\langle a\rangle}_{(0)} = e$, otherwise $a$ is \newword{frozen}. If $a$ is mutable, then $\mathbf{u}^{\langle a\rangle}$ is called an \newword{almost positive subexpression} of $\mathbf{v}$.
\end{definition}

We now describe how to relate subexpressions of $\mathbf{v}$ to paths in $\widehat\Gamma$.
\newcommand{\pth}{\operatorname{path}}
\begin{definition}
    Let $\mathbf{u}$ be a subexpression of $\mathbf{v}$. Let the indices in  $J_{\mathbf{u}}$ be $i_1< \cdots < i_r$.
    Define $\pth(\mathbf{u})$ to be the path $x_0- x_1 - \cdots - x_r$ in $\widehat\Gamma$ with vertex sequence
    \[ u^{-1} - v^{-1}\mathbf{v}_{(i_r-1)} \mathbf{u}_{(i_r-1)}^{-1}  - v^{-1}\mathbf{v}_{(i_{r-1}-1)}\mathbf{u}_{(i_{r-1}-1)}^{-1} - \cdots - v^{-1}\mathbf{v}_{(i_1-1)}\mathbf{u}_{(i_1-1)}^{-1}  = v^{-1}. \]
\end{definition}

We will be particularly interested in the case where $\pth(\mathbf{u})$ is in fact a path in the directed Bruhat graph $\Gamma$. The following notion is due to Deodhar \cite{deodhar-distinguished}.

\begin{definition}
    A subexpression $\mathbf{u}$ of $\mathbf{v}$ is \newword{distinguished} if, for all $k$ such that $\mathbf{u}_{(k-1)} s_{k} < \mathbf{u}_{(k-1)}$, we have $\mathbf{u}_{(k)} = \mathbf{u}_{(k-1)}s_{k}$.
\end{definition}

The positive and almost positive subexpressions of $\mathbf{v}$ are always distinguished. The positive subexpression is the unique distinguished subexpression $\mathbf{u}$ with $|J_{\mathbf{u}}|=\ell(u,v)$, and the almost positive subexpressions are exactly the subexpressions $\mathbf{u}$ with $|J_{\mathbf{u}}|=\ell(u,v)-2$.

To describe which paths arise as $\pth(\mathbf{u})$ for a distinguished subexpression $\mathbf{u}$, we use reflection orders. A reflection order $\prec$ is said to be \emph{compatible} with the reduced expression $\mathbf{v}$ if $N(v^{-1}\mathbf{v}_{(i)})$ is an initial section of $\prec$ for all $0\leq i \leq \ell(v)$. Equivalently, the smallest $\ell(v)$ positive roots are 
\[ \Phi_1(\mathbf{v}) \prec \Phi_2(\mathbf{v}) \prec \cdots \prec \Phi_{\ell(v)-1}(\mathbf{v}) \prec \Phi_{\ell(v)}(\mathbf{v}), \]
where $\Phi_j(\mathbf{v})$ denotes the root $v^{-1}\mathbf{v}_{(\ell(v)-j+1)}\alpha_{s_{\ell(v)-j+1}}$, 
the unique root $\beta$ with
\begin{equation}
\label{eq:Phi-v}
    \mathbf{v}_{(\ell(v)-j)}^{-1}vt_{\beta} = \mathbf{v}^{-1}_{(\ell(v)-j+1)}v.
\end{equation}


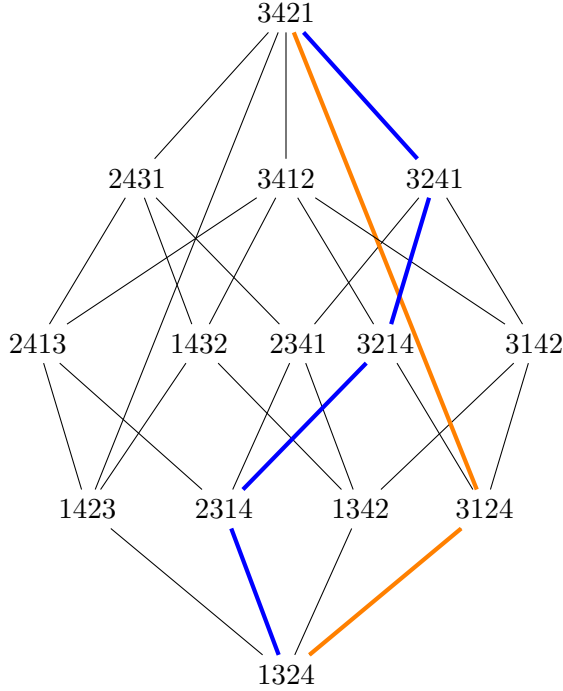
\begin{figure}
    \centering
\begin{tikzpicture}[yscale=.55,xscale=.33]
\node (1324) at (0,4) {1324};
\node (1342) at (3,8) {1342};
\node (1423) at (-8,8) {1423};
\node (2314) at (-2.5,8) {2314};
\node (3124) at (8,8) {3124};
\node (1432) at (-3.5,12) {1432};
\node (2341) at (.5,12) {2341};
\node (2413) at (-10,12) {2413};
\node (3142) at (10,12) {3142};
\node (3214) at (4,12) {3214};
\node (2431) at (-6,16) {2431};
\node (3241) at (6,16) {3241};
\node (3412) at (0,16) {3412};
\node (3421) at (0,20) {3421};
\draw (1324) -- (1342);
\draw (1324) -- (1423);
\draw[blue, ultra thick] (1324) -- (2314);
\draw (1324) -- (3124);
\draw[orange, ultra thick] (1324) -- (3124);
\draw (1342) -- (1432);
\draw (1342) -- (2341);
\draw (1342) -- (3142);
\draw (1423) -- (1432);
\draw (1423) -- (2413);
\draw (1423) -- (3421);
\draw (1432) -- (2431);
\draw (1432) -- (3412);
\draw (2314) -- (2341);
\draw (2314) -- (2413);
\draw[blue, ultra thick] (2314) -- (3214);
\draw (2341) -- (2431);
\draw (2341) -- (3241);
\draw (2413) -- (2431);
\draw (2413) -- (3412);
\draw (2431) -- (3421);
\draw (3124) -- (3142);
\draw (3124) -- (3214);
\draw[orange, ultra thick] (3124) -- (3421);
\draw (3142) -- (3241);
\draw (3142) -- (3412);
\draw[blue, ultra thick] (3214) -- (3241);
\draw (3214) -- (3412);
\draw[blue, ultra thick] (3241) -- (3421);
\draw (3412) -- (3421);
\end{tikzpicture}
    \caption{The Bruhat graph for $[1324, 3421]$ in $S_4$. The longest increasing path $\gamma_0$ is shown in blue and second-length path in orange (using the reduced expressions from \Cref{ex:cluster-example}).}
    \label{fig:cluster-fig}
\end{figure}

Compatible reflection orders exist for every reduced expression, by \Cref{prop:existreflectionorders}. Each reflection order which is Deodhar with respect to $v$ is compatible with a unique reduced expression for $v$. 
\begin{lemma}
\label{lem:path-bijection}
    Let $\prec$ be a reflection order which is compatible with $\mathbf{v}$. Then the map $\mathbf{u}\mapsto \pth(\mathbf{u})$ is a bijection from distinguished subexpressions of $\mathbf{v}$ to the increasing paths in $\Gamma$ from $u^{-1}$ to $v^{-1}$. In particular, $\pth(\mathbf{u}^+)$ is the longest increasing path from $u^{-1}$ to $v^{-1}$.
\end{lemma}
\begin{proof}
    Let $\gamma$ be a path in $\widehat \Gamma$ ending at the vertex $v^{-1}$. Then $\gamma$ may be described as $x_0 \overset{\beta_1}- x_1 \overset{\beta_2}- \cdots \overset{\beta_r}- x_r = v^{-1}$, where
    $x_j = t_{\beta_{j+1}}\cdots t_{\beta_r} x_r$. For this path to have increasing edge labels, we must have $\beta_1\prec \cdots \prec \beta_r$. For each $1\leq i \leq |\Phi^+|$, let $\Phi_i(\prec)$ be the $i$th smallest positive root in the ordering by $\prec$. Let $i_j$ be the index such that $\beta_j = \Phi_{i_j}(\prec)$. Then $\gamma$ has increasing edge labels if and only if $i_1 < i_2 < \cdots < i_r$. Assume now that $\ell(x_{r-1}) < \ell(x_r) $, so that the last edge of $\gamma$ is in fact a directed edge of $\Gamma$. This occurs if and only if $\beta_r \in N(v^{-1})$, which, since $\prec$ is compatible with $\mathbf{v}$, is equivalent to $\{\beta_1,\ldots,\beta_r\}\subseteq N(v^{-1})$. In this case, the element $x_j^{-1} = vt_{\beta_r}t_{\beta_{r-1}}\cdots t_{\beta_{j+1}}$  is obtained by taking the expression $\mathbf{v}$ and deleting the entries in positions $\ell(v)-i_r+1, \ell(v)-i_{r-1}+1,\ldots,\ell(v)-i_{j+1}+1$, by (\ref{eq:Phi-v}). In particular, when $x_0=u^{-1}$, then performing this process for $x_0^{-1}$ gives a subexpression $\mathbf{u}$ of $\mathbf{v}$. This inverts the map $\mathbf{u} \mapsto \pth(\mathbf{u})$.
    Hence the paths $\gamma$ in the undirected Bruhat graph $\widehat \Gamma$ which begin at $u^{-1}$ and end at $v^{-1}$, with increasing edge labels, such that the last edge is in $\Gamma$, are exactly the $\pth(\mathbf{u})$ for subexpressions $\mathbf{u}$ of $\mathbf{v}$.

    We may now verify that $\pth(\mathbf{u})$ is a path in $ \Gamma$ if and only if $\mathbf{u}$ is distinguished. Indeed, assume $\pth(\mathbf{u})$ is not a path in $\Gamma$ and let $j$ an index so that $x_{j-1} \leftarrow x_{j}$. Then $\mathbf{u}_{(\ell(v)-i_j)}s_{\ell(v)-i_j} < \mathbf{u}_{(\ell(v)-i_j)}$ (here we use the fact that $s_{\ell(v)-i_j}s_{\ell(v)-i_j+1}\cdots s_{\ell(v)}$ is a reduced expression). Hence $\mathbf{u}$ is not distinguished, since $\mathbf{u}_{(\ell(v)-i_j+1)}=\mathbf{u}_{(\ell(v)-i_j)}$. Conversely, if $j$ is an index so that $\mathbf{u}_{(\ell(v)-i_j)}s_{\ell(v)-i_j} < \mathbf{u}_{(\ell(v)-i_j)}$ and $\mathbf{u}_{(\ell(v)-i_j+1)}=\mathbf{u}_{(\ell(v)-i_j)}$, then $\ell(x_{j-1})>\ell(x_j).$ This proves the claim.
\end{proof}

\begin{remark}
Under the bijection of \Cref{lem:path-bijection},  Dyer's formula (\Cref{prop:Rpolyincreasingpaths}) for the $R$-polynomial turns into Deodhar's formula in terms of distinguished subexpressions \cite{deodhar-distinguished}.
\end{remark}

\begin{example}
\label{ex:cluster-example}
    Take $W=S_4$, with simple roots $\alpha_1,\alpha_2,\alpha_3$. Let $u=s_{\alpha_2}$ and 
    \[\mathbf{v}=s_{\alpha_2}s_{\alpha_3}s_{\alpha_1}s_{\alpha_2}s_{\alpha_1} = s_1s_2s_3s_4s_5.\] 
    Then $u^{-1}$ and $v^{-1}$ are the permutations with the one-line notations $1324$ and $3421$, respectively. The Bruhat graph $\Gamma_{u^{-1},v^{-1}}$ is shown in \Cref{fig:cluster-fig}. The positive subexpression is
    \begin{center}
        \begin{tabular}{c c c c c c c}
            $\mathbf{v}$ & $=$ & $s_1$ & $s_2$ & $s_3$ & $s_4$ & $s_5$  \\
            $\mathbf{u}^+$ & $ = $ & $1$ & $1$ & $1$ & $s_4$ & $1$ 
        \end{tabular}.
    \end{center}
    Here, the notation indicates that the partial products are given by $\mathbf{u}^+_{(k+1)} = \mathbf{u}^+_{(k)}$ for $k\neq 3$, and $\mathbf{u}^+_{(4)} = \mathbf{u}^+_{(3)}s_4$. In this case $J_{\mathbf{u}^+} = \{1,2,3,5\}$. Among these indices, only $5$ is mutable. The almost positive subexpression $\mathbf{u}^{\langle 5\rangle}$ is 
    
    \begin{center}
        \begin{tabular}{c c c c c c c}
            $\mathbf{u}^{\langle 5\rangle}$ & $ = $ & $s_1$ & $1$ & $s_3$ & $1$ & $s_5$ 
        \end{tabular}.
    \end{center}

    From these two distinguished subexpressions, we produce two paths:
    \begin{align*} \pth(\mathbf{u}^+) &: u^{-1}= s_4 \xrightarrow{\alpha_1} s_5s_4 \xrightarrow{\alpha_2} s_5s_4s_3 \xrightarrow{\alpha_1+\alpha_2+\alpha_3} s_5s_4s_3s_2 \xrightarrow{\alpha_2+\alpha_3}  s_5s_4s_3s_2s_1 = v^{-1} \\
    \pth(\mathbf{u^{\langle 5\rangle}}) &:  u^{-1} = s_5s_3s_1 \xrightarrow{\alpha_1+\alpha_2} s_5s_4s_3s_1 \xrightarrow{\alpha_1+\alpha_2+\alpha_3} s_5s_4s_3s_2s_1 = v^{-1}.
    \end{align*}
    We have labeled the edges by their root labeling. Now, to compare with \Cref{lem:path-bijection}, we should determine a reflection order compatible with $\mathbf{v}$. We have 
    \begin{align*}
        \Phi_1(\mathbf{v}) &= \alpha_1 \\
        \Phi_2(\mathbf{v}) &= \alpha_1+\alpha_2 \\
        \Phi_3(\mathbf{v}) &= \alpha_2 \\
        \Phi_4(\mathbf{v}) &= \alpha_1+\alpha_2+\alpha_3 \\
        \Phi_5(\mathbf{v}) &= \alpha_2+\alpha_3. 
    \end{align*}
    The unique reflection order which puts these roots in order as the smallest roots is
    \[ \alpha_1 \prec \alpha_1+\alpha_2 \prec \alpha_2 \prec \alpha_1 + \alpha_2+\alpha_3 \prec \alpha_2+\alpha_3 \prec \alpha_3. \]
    We can now verify that $\pth(\mathbf{u}^+)$ is the longest increasing path with respect to $\prec$, and $\pth(\mathbf{u}^{\langle 5\rangle})$ is the unique second-length path.
    \renewcommand{\qedsymbol}{$\Diamond$}\qed
\end{example}

We can now state the result motivating the construction in \Cref{thm:gchain}. Note that each element of $J_{\mathbf{u}}$ naturally labels an edge of $\pth(\mathbf{u})$. 

\begin{proposition}
\label{prop:frozen-to-F}
    Let $\prec$ be a reflection order compatible with $\mathbf{v}$, with longest increasing path $\gamma_0 = \pth(\mathbf{u^+})$. Then, under the correspondence between elements of $J_{\mathbf{u}^+}$ and edges of $\pth(\mathbf{u}^+)$, the frozen indices correspond to elements of $F_{u^{-1},v^{-1}}$.
\end{proposition}
\begin{proof}
    An index $a \in J_{\mathbf{u}^+}$ is mutable if and only if $\mathbf{u}^{\langle a \rangle}$ is an almost positive subexpression. By \Cref{lem:path-bijection}, this is true if and only if there is a second-length path diverging from $\gamma_0$ at $v^{-1}\mathbf{v}_{(a)}(\mathbf{u}^+_{(a)})^{-1}$. And by \Cref{lem:countdiverging} this happens if and only if the subsequent edge of $\gamma_0$ is not in $F_{u^{-1},v^{-1}}$.
\end{proof}

\begin{remark}
A clash in standard conventions for increasing paths and for subexpressions has resulted in the inverses appearing in \Cref{prop:frozen-to-F}. However, inversion is a poset isomorphism $[u^{-1},v^{-1}] \to [u,v]$ sending $F_{u^{-1},v^{-1}}$ to a diamond-generating set of $[u,v]$.
\end{remark}


\section*{Acknowledgments}
GTB was supported by National Science Foundation grants DMS-2152991 and DMS-2503536. CG was partially supported by NSF grant DMS-2452032 and by a travel grant from the Simons Foundation. TL was partially supported by NSF grant DMS-2348799 and by a Simons Fellowship.  The authors wish to thank the Sydney Mathematical Research Institute for the excellent collaborative conditions provided during our visit for the ``Modern Perspectives in Representation Theory" program; we also thank Yibo Gao, Chris Hone, Finn Klein, and Geordie Williamson for the stimulating discussions we had there. GTB would also like to thank Amanda Schwartz and David Speyer for helpful conversations.

\bibliographystyle{halpha-abbrv}
\bibliography{guv}
\end{document}